





\documentclass[sn-mathphys]{sn-jnl}



\jyear{2021}%

\theoremstyle{thmstyleone}%
\newtheorem{theorem}{Theorem}
%
\usepackage[euler]{textgreek}
\usepackage{amscd,amssymb,amsmath,amsthm}
\usepackage{graphicx}
\usepackage{caption}    
\usepackage{subcaption} 

\usepackage{xcolor}

\usepackage{epstopdf}
\theoremstyle{thmstyletwo}%
\newtheorem{remark}{Remark}%
\newtheorem{lemma}{Lemma}

\theoremstyle{thmstylethree}%

\raggedbottom

\begin{document}

\title[Gibbs measure for mixed spins and mixed types model]{Gibbs measure for mixed spins and mixed types model}


\author[1,2]{\fnm{Muzaffar} \sur{Rahmatullaev}}\email{mrahmatullaev@rambler.ru}

\author*[2]{\fnm{Akbarkhuja} \sur{Tukhtabaev}}\email{akbarxoja.toxtaboyev@mail.ru}
\equalcont{These authors contributed equally to this work.}

\affil[1]{\orgname{Institute of Mathematics, Academy of Science}, \orgaddress{\street{9, University str}, \city{Tashkent}, \postcode{100174}, \country{Uzbekistan}}}

\affil*[2]{\orgname{Namangan State University}, \orgaddress{\street{161, Boburshoh street}, \city{Namangan}, \postcode{160107}, \country{Uzbekistan}}}


\abstract{In the present paper, we study the $(2,q)$-Ising-Potts model on the Cayley tree. We have derived a recurrence equation that shows the existence of a splitting Gibbs measure for this model. Furthermore, we have proven that for the $(2,q)$-Ising-Potts model on the Cayley tree of order $k\geq2$, there are at least 3 translation-invariant splitting Gibbs measures. We also prove that for the $(2,3)$-Ising-Potts model on the Cayley tree, specifically the binary tree, under certain conditions, there are at least 8 translation-invariant splitting Gibbs measures.}

\keywords{Ising-Potts model; Gibbs measure; translation-invariant; phase transition.}



\maketitle

\section{Introduction}\label{sec1}

Currently, special attention is paid to the study of mixed-type models. The results obtained for mixed-type models can be applied to each model individually, or to a combination of them, in order to represent the different states of components in a binary mixture (e.g., solid or liquid). By adjusting parameters within the model, such as the interaction energies between molecules, researchers can simulate the phase behavior of the mixture and predict meltig points, eutectic points, and other phase transitions.

In \cite{S16}, \cite{RR21} the Potts-SOS model, in \cite{Mc89}, \cite{RI24}, \cite{HOR25} the Ising-Potts model were studied. For a deeper understanding of the physical motivation behind the Ising-Potts model, refer to \cite{Mc89}.

In the present work, we consider the Ising-Potts model with parameter $\alpha$ and non-homogenous parameters on the Cayley tree. The model under study differs from the one considered in \cite{Mc89} and \cite{HOR25}. It is almost identical to the model in \cite{RI24}, but in \cite{RI24}, the spin values of the mixed model were taken to be the same for both the Ising and Potts components. In our case, however, the spin values for the Ising and Potts models are taken from different sets. The main goal of the work is to study the translation-invariant Gibbs measures for this model.

The structure of the paper is as follows. Section 2 provides preliminary information about Cayley tree, Gibbs measure for the Ising-Potts model.  In Section 2, we also find the functional-equation guaranteeing existence of a unique splitting Gibbs measure. Section 3 is devoted to the study of translation-invariant Gibbs measures for the Ising-Potts model on a Cayley tree. In section 4, we give the list of open problems related to for this model.

\section{Preliminaries}

\subsection{Cayley tree}
The Cayley tree $\Gamma ^{k}$ of order $k\geq1$ is an infinite regular tree, i.e. a graph each vertex originate exactly $k+1$ edges. Denote by $V$ and $L$ the set of vertices and edges of the Cayley tree $\Gamma ^{k}$, respectively. Two vertices $x$ and $y$ are called
\emph{nearest neighbours} if there exist an edge $l \in L$
connecting them and denote by $l = \langle x, y\rangle.$

Fix $x_0 \in {\Gamma ^k}$. For ${x,y}\in{\Gamma^k}$, denote by $d(x,y)$ the number of edges
in the shortest path connecting $x$ and $y$.

We set
$$
W_n = \{ x \in V: d(x, x_0) = n\} ,\ \ {V_n} = \{ x \in V: d(x, x_0) \le n\},\\
$$$${L_n} = \{ l = \langle x,y \rangle  \in L: x,y \in {V_n}\}.
$$

Let $x\in W_n$,
$$S(x)=\{y \in W_{n+1}: d(y,x)=1\}, \ \ {S_1}(x) = \{ y \in
V: d(y,x) = 1\},\ x\downarrow={S_1}(x)\setminus S(x).
$$
The set $S(x)$ is called the set of the direct successors of the
vertex $x$ (see \cite{RGM}).

\subsection{Gibbs measures for the $(2,q)$-Ising-Potts model}

Let $\Phi=\{-1,1\}$, $\Psi=\{1,2,...,q\}$ ($q\geq3$) and is assigned to the vertices of the tree
$\Gamma^k=(V,L)$. A configuration $(\sigma, s)$ on $A\subset V$ is then defined as
a function $x\in A\mapsto(\sigma(x),s(x))\in\Phi\times\Psi$. The set of all configurations on $A$
 is denoted by $\Omega_A  = (\Phi\times\Psi)^A$. One can see that
$\Omega_{V_n}=\Omega_{V_{n-1}}\times\Omega_{W_n}$. Using this, for given
configurations $(\sigma_{n-1},s_{n-1})\in\Omega_{V_{n-1}}$ and $(\sigma_{[n]}, s_{[n]})\in\Omega_{W_{n}}$ we
define their concatenations  by

\begin{equation}\label{unionconf}
((\sigma_{n-1}, s_{n-1}) \vee (\sigma_{[n]}, s_{[n]}))(x) = \left\{
\begin{array}{ll}
(\sigma_{n-1}, s_{n-1})(x), & \mbox{if}\,\,x \in {V_{n - 1}},\\
(\sigma_{[n]}, s_{[n]})(x), & \mbox{if}\,\,x \in {W_n}.
\end{array} \right.
\end{equation}
where $(\sigma, s)(x):=(\sigma(x), s(x))$. It is clear that $(\sigma_{n-1}, s_{n-1}) \vee (\sigma_{[n]}, s_{[n]})\in \Omega_{V_n}$.

The Hamiltonian of the $(2,q)$-Ising-Potts model is given by
\begin{equation}\label{Ham}
H_n(\sigma_n, s_n)=-\alpha \sum_{\langle x,y\rangle \in L_n}J_{I}(x,y)\sigma(x)\sigma(y)-(1-\alpha)\sum_{\langle x,y\rangle \in L_n}J_{P}(x,y)\delta_{s(x)s(y)},
\end{equation}
where $\sigma\in\Phi$, $s\in\Psi$, $\alpha\in[0,1]$.

\begin{remark}\label{comp1} Choosing by parameters in \eqref{Ham}, we get the following special cases
\begin{enumerate}
\item If $\alpha=0$ or  $J_{I}(x,y)=0$ and $J_{P}(x,y)=Const$ then \eqref{Ham} coincides with "ordinary" Potts model;\\[2mm]
 \item If $\alpha=1$ or  $J_{P}(x,y)=0$ and $J_{I}(x,y)=Const$ then \eqref{Ham} coincides with "ordinary" Ising model;\\[2mm]
 \item If $\alpha=0$ or  $J_{I}(x,y)=0$ and $J_{P}(x,y)=J_{P}+\frac{\alpha}{k+1}\left(\delta_{1\sigma(x)}+\delta_{1\sigma(y)}\right)$ then \eqref{Ham} coincides with Potts model with external field;\\[2mm]
  \item If $\alpha=1$ or  $J_{P}(x,y)=0$ and $J_{I}(x,y)=J_{I}+\frac{\alpha}{k+1}\left(\sigma(x)+\sigma(y)\right)$ then \eqref{Ham} coincides with Ising model with external field.\\[2mm]
\end{enumerate}
\end{remark}
Let us consider a probability measure
$\mu_{n}$ on $\Omega_{V_n}$ defined by
\begin{equation}\label{mu}
\mu_{n}(\sigma_n, s_n)=Z_{n}^{-1}\exp\left\{-\beta H_n(\sigma_n,s_n)+\sum _{x\in
W_n}h_{\sigma(x),s(x),x}\right\},
\end{equation}
where $\beta=\frac{1}{T}, T>0$- temperature,

$\textbf{h}_x=\{\left(h_{1,1,x},...,h_{1,q,x},h_{-1,1,x},...,h_{-1,q,x}\right)\in\mathbb R^{2q},x\in V\}$ is a collection of vectors,
 $Z_{n}^{-1}$ is the normalizing factor given by
\begin{equation}\label{ZN1}
Z_{n}=\sum_{(\sigma,s)\in\Omega_{V_n}}\exp\left\{-\beta H_n(\sigma_n,s_n)+\sum _{x\in
W_n}h_{\sigma(x),s(x),x}\right\}.
\end{equation}

A probability distribution
$\mu _{n}$ is said to be consistent if for all $n\geq1$ and
$(\sigma _{n - 1},s_{n-1}) \in {\Omega _{{V_{n - 1}}}}$, we have
\begin{equation}\label{comcon}
\sum\limits_{(\sigma_{[n]}, s_{[n]})  \in {\Omega _{{W_n}}}} {\mu_{n}((\sigma_{n-1}, s_{n-1}) \vee (\sigma_{[n]}, s_{[n]})) = \mu _{n -
1}(\sigma_{n-1}, s_{n-1})}.
\end{equation}

By the Kolmogorov theorem \cite{Athreya}, \cite{Haydarov}, there exists a unique \emph{splitting Gibbs measure} ${\mu}$ on the set $\Omega_V$
such that, for all $n$ and $(\sigma _{n},s_n) \in {\Omega _{{V_{n}}}}$, $${\mu}\left( {\left\{ {(\sigma, s)\mid_{V_n}=(\sigma_n,s_n)} \right\}} \right) = \mu_{n}(\sigma_n, s_n). $$

Denote  $\theta_{I}(x,y):=\exp\{\beta J_{I}(x,y)\}$, $\theta_{P}(x,y):=\exp\{\beta J_{P}(x,y)\}$, $z_{i,j,x}:=\exp\{h_{i,j,x}-h_{-1,q,x}\}$, $i=-1,1, j=1,2,...,q$.
To simplify the notation, we will write the above as follows $\theta_{I}(x,y)=\theta_{I}$, $\theta_{P}(x,y)=\theta_{P}$. We specifically highlight the constant case.

The following theorem is equivalent to the compatibility condition and ensures the existence of Gibbs measures.
\begin{theorem}\label{comp.eq} The measures $\mu_{n}(\sigma_n, s_n), n=1,2,...,$ satisfy the compatibility condition \eqref{comcon} if and only if for any $x\in V\setminus\{x_0\}$ the following equation holds:

If $(i,j)\neq(-1,q)$ then
$$z_{i,j,x}=$$
\begin{equation}\label{funceq}
\prod\limits_{y\in S(x)}\frac{\theta_{I}^{\alpha (1-i)}\left(\sum\limits_{v=1}^{q}\left(\theta_{I}^{2\alpha i}z_{1,v,y}+z_{-1,v,y} \right)+\left(\theta_{P}^{1-\alpha}-1\right)\left(\theta_{I}^{2\alpha i}z_{1,j,y}+z_{-1,j,y}\right)\right) }{\sum\limits_{v=1}^{q-1}\left(z_{1,v,y}+\theta_{I}^{2\alpha }(x,y)z_{-1,v,y} \right)+\theta_{P}^{1-\alpha}\left(z_{1,q,y}+\theta_{I}^{2\alpha }(x,y)\right)}\end{equation}

If $(i,j)=(-1,q)$ then  $z_{-1,q,x}=1$.
\end{theorem}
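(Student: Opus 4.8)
The plan is to run the standard boundary-law computation, adapted to the mixed $(2,q)$ model. I would substitute the explicit form \eqref{mu} of $\mu_n$ into the consistency condition \eqref{comcon} and use the tree decompositions $L_n=L_{n-1}\cup\{\langle x,y\rangle:x\in W_{n-1},\ y\in S(x)\}$ and $W_n=\bigcup_{x\in W_{n-1}}S(x)$. Writing $\theta_I=e^{\beta J_I}$ and $\theta_P=e^{\beta J_P}$, the Boltzmann weight of \eqref{Ham} factorizes as $e^{-\beta H_n}=e^{-\beta H_{n-1}}\prod_{x\in W_{n-1}}\prod_{y\in S(x)}\theta_I^{\alpha\sigma(x)\sigma(y)}\theta_P^{(1-\alpha)\delta_{s(x)s(y)}}$, and the boundary field over $W_n$ factorizes over the same edges. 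Summing over $(\sigma_{[n]},s_{[n]})\in\Omega_{W_n}$ therefore factorizes completely over the successors and, after cancelling the common positive factor $e^{-\beta H_{n-1}(\sigma_{n-1},s_{n-1})}$, turns \eqref{comcon} into the identity $Z_n^{-1}\prod_{x\in W_{n-1}}F(x;\sigma(x),s(x))=Z_{n-1}^{-1}\prod_{x\in W_{n-1}}e^{h_{\sigma(x),s(x),x}}$ valid for every $(\sigma_{n-1},s_{n-1})\in\Omega_{V_{n-1}}$, where $F(x;i,j):=\prod_{y\in S(x)}F(x,y;i,j)$ and $F(x,y;i,j):=\sum_{u\in\Phi}\sum_{v=1}^{q}\theta_I^{\alpha i u}\theta_P^{(1-\alpha)\delta_{jv}}e^{h_{u,v,y}}$.

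Since the $x$-th factor on each side depends only on the spin at $x$, I would invoke the usual "vary one coordinate at a time" argument: fixing the configuration at every vertex of $W_{n-1}$ except one shows that $F(x;i,j)/e^{h_{i,j,x}}$ is a constant $a_x>0$ independent of $(i,j)\in\Phi\times\Psi$; conversely, given such constants the displayed identity holds, and summing it over $\Omega_{V_{n-1}}$ while using that the $\mu_n$ are probability measures forces $Z_n=Z_{n-1}\prod_{x\in W_{n-1}}a_x$, so \eqref{comcon} is recovered. Dividing the relation $F(x;i,j)=a_x e^{h_{i,j,x}}$ by the one for $(i,j)=(-1,q)$ eliminates $a_x$ and gives $z_{i,j,x}=\prod_{y\in S(x)}F(x,y;i,j)/F(x,y;-1,q)$, the case $(i,j)=(-1,q)$ reducing to $z_{-1,q,x}=1$. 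Carrying this out at every level $n\ge1$ yields exactly one such equation at each $x\in V\setminus\{x_0\}$, which is the asserted equivalence.

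What remains is to bring $F(x,y;i,j)/F(x,y;-1,q)$ into the form written in \eqref{funceq}. I would divide numerator and denominator by $e^{h_{-1,q,y}}$, replacing each weight $e^{h_{u,v,y}}$ by $z_{u,v,y}$ (so $z_{-1,q,y}=1$), split the inner sum into its $u=1$ and $u=-1$ parts, and use $\theta_P^{(1-\alpha)\delta_{jv}}=1+(\theta_P^{1-\alpha}-1)\delta_{jv}$ to extract the diagonal Potts term. Multiplying numerator and denominator by $\theta_I^{\alpha}$ clears the negative Ising exponents; then the identities $\alpha(1+i)=\alpha(1-i)+2\alpha i$ and $z_{-1,q,y}=1$ turn the numerator into $\theta_I^{\alpha(1-i)}\left(\sum_{v=1}^{q}(\theta_I^{2\alpha i}z_{1,v,y}+z_{-1,v,y})+(\theta_P^{1-\alpha}-1)(\theta_I^{2\alpha i}z_{1,j,y}+z_{-1,j,y})\right)$ and the denominator into $\sum_{v=1}^{q-1}(z_{1,v,y}+\theta_I^{2\alpha}z_{-1,v,y})+\theta_P^{1-\alpha}(z_{1,q,y}+\theta_I^{2\alpha})$, i.e.\ exactly \eqref{funceq}.

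The genuinely delicate points are the "vary one coordinate" step, which must be phrased so that it delivers both implications and simultaneously pins down $Z_n$, and the final simplification, where one has to keep careful track of how the asymmetric reference spin $(-1,q)$ interacts with the Ising weight $\theta_I^{\alpha\sigma(x)\sigma(y)}$; the rest is routine algebra.
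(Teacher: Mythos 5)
Your proposal is correct and follows essentially the same route as the paper's own proof: factorizing the Boltzmann weight over the edges from $W_{n-1}$ to $W_n$, reducing \eqref{comcon} to the pointwise relation $F(x;i,j)=a_x e^{h_{i,j,x}}$ (the paper's equation \eqref{cc9}), dividing by the reference case $(i,j)=(-1,q)$, and recovering $Z_n=A_{n-1}Z_{n-1}$ from normalization for the converse. The final algebraic simplification you describe (extracting $\theta_I^{\alpha(1-i)}$ after clearing negative exponents and using $1+(\theta_P^{1-\alpha}-1)\delta_{jv}$) is exactly the computation carried out in the paper.
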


\begin{proof}
\emph{Necessity.} We prove that \eqref{comcon} $\Rightarrow$ \eqref{funceq}.

We first compute some necessary calculation.

1) We rewrite \eqref{Ham}
\begin{eqnarray*}
H_n(\sigma_n, s_n)&=&-\alpha \sum_{\langle x,y\rangle \in L_n}J_{I}(x,y)\sigma(x)\sigma(y)-(1-\alpha)\sum_{\langle x,y\rangle \in L_n}J_{P}(x,y)\delta_{s(x)s(y)}\\[2mm]
&=&-\alpha \sum_{\langle x,y\rangle \in L_{n-1}}J_{I}(x,y)\sigma(x)\sigma(y)
-\alpha\sum_{x\in W_{n-1}}\sum_{y\in S(x)}J_{I}(x,y)\sigma(x)\sigma(y)\\[2mm]
&-&(1-\alpha) \sum_{\langle x,y\rangle \in L_{n-1}}J_{P}(x,y)\delta_{s(x)s(y)}\\[2mm]
&-&(1-\alpha)\sum_{x\in W_{n-1}}\sum_{y\in S(x)}J_{P}(x,y)\delta_{s(x)s(y)}\\[2mm]
&=&H_{n-1}(\sigma_{n-1}, s_{n-1})-\alpha\sum_{x\in W_{n-1}}\sum_{y\in S(x)}J_{I}(x,y)\sigma_{n-1}(x)\sigma_{[n]}(y)\\[2mm]
&-&(1-\alpha)\sum_{x\in W_{n-1}}\sum_{y\in S(x)}J_{P}(x,y)\delta_{s_{n-1}(x)s_{[n]}(y)},\\[2mm]
\end{eqnarray*}
i.e.

$$H_n(\sigma_n, s_n)=H_{n-1}(\sigma_{n-1}, s_{n-1})-\alpha\sum_{x\in W_{n-1}}\sum_{y\in S(x)}J_{I}(x,y)\sigma_{n-1}(x)\sigma_{[n]}(y)$$
\begin{equation}\label{rH}
-(1-\alpha)\sum_{x\in W_{n-1}}\sum_{y\in S(x)}J_{P}(x,y)\delta_{s_{n-1}(x)s_{[n]}(y)}.
\end{equation}
2) We simplify the following expression
\begin{equation}\label{nthg}
\sum _{x\in W_n}h_{\sigma(x),s(x),x}=\sum_{x\in W_{n-1}}\sum_{y\in S(x)}h_{\sigma(y),s(y),y}=\sum_{x\in W_{n-1}}\sum_{y\in S(x)}h_{\sigma_{[n]}(y),s_{[n]}(y),y}.
\end{equation}
3) Putting \eqref{rH} and \eqref{nthg} into \eqref{mu}, we have
\begin{eqnarray*}
\mu_{n}(\sigma_n, s_n)&=&Z_{n}^{-1}\exp\left\{-\beta H_n(\sigma_n,s_n)+\sum _{x\in
W_n}h_{\sigma(x),s(x),x}\right\}\\[2mm]
\end{eqnarray*}
$$=Z_n^{-1}\exp\{-\beta H_{n-1}(\sigma_{n-1},s_{n-1})+$$ $$+\sum_{x\in W_{n-1}}\sum_{y\in S(x)}\left[\beta \alpha  J_{I}(x,y)\sigma_{n-1}(x)\sigma_{[n]}(y)
+\beta (1-\alpha) J_{P}(x,y)\delta_{s_{n-1}(x)s_{[n]}(y)}\right]+$$ $$+\sum_{x\in W_{n-1}}\sum_{y\in S(x)}\left[ h_{\sigma_{[n]}(y),s_{[n]}(y),y}\right]\}.$$

$$\mu_{n}(\sigma_n, s_n)=Z_n^{-1}\exp\{-\beta H_{n-1}(\sigma_{n-1},s_{n-1})\}\cdot$$

$$\cdot\prod_{x\in W_{n-1}}\prod_{y\in S(x)}\exp\{\left[\beta \alpha  J_{I}(x,y)\sigma_{n-1}(x)\sigma_{[n]}(y)
+\beta (1-\alpha) J_{P}(x,y)\delta_{s_{n-1}(x)s_{[n]}(y)}\right]\}$$
\begin{equation}\label{rM}\cdot\prod_{x\in W_{n-1}}\prod_{y\in S(x)}\exp\{ h_{\sigma_{[n]}(y),s_{[n]}(y),y}\}.
\end{equation}
4) \eqref{mu} follows that

$$\mu_{n}(\sigma_{n-1}, s_{n-1})=Z_{n-1}^{-1}\exp\{-\beta H_{n-1}(\sigma_{n-1},s_{n-1})\}$$
\begin{equation}\label{n1}\cdot\prod_{x\in
W_{n-1}}\exp\{h_{\sigma_{n-1}(x),s_{n-1}(x),x}\}.
\end{equation}
5) \eqref{comcon} yields that

 $$\mu _{n -1}(\sigma_{n-1}, s_{n-1})=\sum\limits_{(\sigma_{[n]}, s_{[n]})\in \Omega _{W_n}} \mu
_{n}((\sigma_{n-1}, s_{n-1}) \vee (\sigma_{[n]}, s_{[n]}))$$$$=\sum\limits_{(\sigma_{[n]}, s_{[n]})  \in {\Omega _{W_n}}} \mu
_{n}(\sigma_n,s_n).$$

Putting \eqref{n1} and \eqref{rM} into the last equation, we get
$$\prod _{x\in
W_{n-1}}\exp\{h_{\sigma_{n-1}(x),s_{n-1}(x),x}\}=$$
$$=\frac{Z_{n-1}}{Z_n}\sum\limits_{(\sigma_{[n]}, s_{[n]})  \in {\Omega _{W_n}}} \prod_{x\in W_{n-1}}\prod_{y\in S(x)}\exp[\beta \alpha  J_{I}(x,y)\sigma_{n-1}(x)\sigma_{[n]}(y)$$
\begin{equation}\label{cc3}
+\beta (1-\alpha) J_{P}(x,y)\delta_{s_{n-1}(x)s_{[n]}(y)}+ h_{\sigma_{[n]}(y),s_{[n]}(y),y}].
\end{equation}

Fix $x\in W_{n-1}$, rewrite the last equality for $(\sigma_{n-1},s_{n-1})=(i,j)\in\ \Phi\times\Psi$ and $(\sigma_{n-1},s_{n-1})=(-1,q)$, then dividing each of them to the last one, we have
$$
\prod\limits_{y\in S(x)}\frac{\sum\limits_{(u,v)\in \Phi\times\Psi}\exp\left[\beta \alpha  J_{I}(x,y)iu
+\beta (1-\alpha) J_{P}(x,y)\delta_{jv}+ h_{u,v,y}\right]}{\sum\limits_{(u,v)\in \Phi\times\Psi}\exp\left[-\beta \alpha  J_{I}(x,y)u
+\beta (1-\alpha) J_{P}(x,y)\delta_{qv}+ h_{u,v,y}\right]}$$
\begin{equation}\label{mainone}=\exp\{h_{i,j,x}-h_{-1,q,x}\},
\end{equation}
where $(i,j)\in\Phi\times\Psi$.

Using notations above $\theta_{I}(x,y)=\theta_{I}=\exp\{\beta J_{I}(x,y)\}$, $\theta_{P}(x,y)=\theta_{P}=\exp\{\beta J_{P}(x,y)\}$, the fraction LHS of \eqref{mainone} equals
$$\frac{\sum\limits_{v\in \Psi}\theta_{P}^{(1-\alpha)\delta_{jv}}\theta_{I}^{-\alpha i}\left(\theta_{I}^{2\alpha i}\exp\{h_{1,v,y}\}+\exp\{h_{-1,v,y}\} \right)}{\sum\limits_{v\in \Psi}\theta_{P}^{(1-\alpha)\delta_{qv} }\theta_{I}^{-\alpha }\left( \exp\{ h_{1,v,y}\}+ \theta_{I}^{2\alpha }\exp\{h_{-1,v,y}\} \right)}
=$$
$$=\frac{\theta_{I}^{-\alpha i}\left(\sum\limits_{v\neq j}\left(\theta_{I}^{2\alpha i}e^{h_{1,v,y}}+e^{h_{-1,v,y}} \right)+\theta_{P}^{1-\alpha} \left(\theta_{I}^{2\alpha i}e^{h_{1,j,y}}+e^{h_{-1,j,y}}\right)\right)}{\theta_{I}^{-\alpha}\left(\sum\limits_{v\neq q}\left(e^{h_{1,v,y}}+\theta_{I}^{2\alpha}e^{h_{-1,v,y}} \right)+\theta_{P}^{1-\alpha}\left(e^{h_{1,q,y}}+\theta_{I}^{2\alpha}e^{h_{-1,q,y}}\right)\right) }$$

Again using notation above $z_{i,j,x}=\exp\{h_{i,j,x}-h_{-1,q,y}\}$, $i=-1,1, j=1,2,...,q$, we have  $z_{-1,q,x}=1$ and
\eqref{mainone} follows that
$$z_{i,j,x}=$$
$$
\prod\limits_{y\in S(x)}\frac{\theta_{I}^{\alpha (1-i)}\left(\sum\limits_{v=1}^{q}\left(\theta_{I}^{2\alpha i}z_{1,v,y}+z_{-1,v,y} \right)+\left(\theta_{P}^{1-\alpha}-1\right)\left(\theta_{I}^{2\alpha i}z_{1,j,y}+z_{-1,j,y}\right)\right) }{\sum\limits_{v=1}^{q-1}\left(z_{1,v,y}+\theta_{I}^{2\alpha }z_{-1,v,y} \right)+\theta_{P}^{1-\alpha}\left(z_{1,q,y}+\theta_{I}^{2\alpha }\right)}.
$$

\emph{Sufficiency.} We prove that \eqref{funceq} $\Rightarrow$ \eqref{comcon}. Suppose that \eqref{funceq} holds. It is equivalent to the representation
$$\prod\limits_{y\in S(x)}\sum\limits_{(u,v)\in \Phi\times\Psi}\exp\left[\beta \alpha  J_{I}(x,y)iu
+\beta (1-\alpha) J_{P}(x,y)\delta_{jv}+ h_{u,v,y}\right]=$$
\begin{equation}\label{cc9}
a(x)\exp\{h_{i,j,x}\}
\end{equation}
for some function $a(x)>0, x\in V$.

We rewrite LHS of \eqref{comcon} as

$$\sum\limits_{(\sigma_{[n]}, s_{[n]})  \in {\Omega _{{W_n}}}} \mu_{n}((\sigma_{n-1}, s_{n-1}) \vee (\sigma_{[n]}, s_{[n]})) =Z_n^{-1}\exp\{-\beta H_{n-1}(\sigma_{n-1},s_{n-1})\}\cdot$$
\begin{equation}\label{cc12}
\cdot\prod_{x\in W_{n-1}}\prod_{y\in S(x)}\sum\limits_{(u,v)\in \Phi\times\Psi}\exp\{\beta \alpha  J_{I}\sigma_{n-1}(x)u
+\beta (1-\alpha) J_{P}\delta_{s_{n-1}(x)v}+ h_{u,v,y}\}.
\end{equation}

Substituting \eqref{cc9} into \eqref{cc12} and denoting $A_n(x)=\prod\limits_{x\in W_{n-1}}a(x)$, we get
$$\sum\limits_{(\sigma_{[n]}, s_{[n]})  \in {\Omega _{{W_n}}}} \mu_{n}((\sigma_{n-1}, s_{n-1}) \vee (\sigma_{[n]}, s_{[n]}))$$
\begin{equation}\label{cc13}
  = \frac{A_{n-1}}{Z_n}\exp\{-\beta H_{n-1}(\sigma_{n-1},s_{n-1})\} \prod\limits_{x\in W_{n-1}}\exp\{h_{\sigma_{n-1},s_{n-1},x}\}
\end{equation}
Since $\mu_{n}$ is a probability, we should have
$$\sum\limits_{(\sigma_{n-1}, s_{n-1})\in \Omega_{V_{n-1}}}\sum\limits_{(\sigma_{[n]}, s_{[n]})  \in {\Omega _{{W_n}}}} \mu_{n}((\sigma_{n-1}, s_{n-1}) \vee (\sigma_{[n]}, s_{[n]}))=1.$$
\eqref{cc13} yields
\begin{equation}\label{cc14}
\sum\limits_{(\sigma_{[n]}, s_{[n]})  \in {\Omega _{{W_n}}}} \mu_{n}((\sigma_{n-1}, s_{n-1}) \vee (\sigma_{[n]}, s_{[n]})) = \frac{A_{n-1}}{Z_n}\mu_{n-1}(\sigma_{n-1}, s_{n-1}) Z_{n-1}.
\end{equation}
or

$$1=\sum\limits_{(\sigma_{n-1}, s_{n-1})\in \Omega_{V_{n-1}}}\sum\limits_{(\sigma_{[n]}, s_{[n]})  \in {\Omega _{{W_n}}}} \mu_{n}((\sigma_{n-1}, s_{n-1}) \vee (\sigma_{[n]}, s_{[n]})),$$
\begin{equation}\label{cc15}
1=\sum\limits_{(\sigma_{n-1}, s_{n-1})\in \Omega_{V_{n-1}}}\frac{A_{n-1}}{Z_n}\mu_{n-1}(\sigma_{n-1}, s_{n-1}) Z_{n-1},
\end{equation}
or
\begin{equation}\label{cc16}
Z_n=A_{n-1}Z_{n-1}.
\end{equation}
Substituting \eqref{cc16} into \eqref{cc14}, we have

\begin{equation}\label{cc17}
\sum\limits_{(\sigma_{[n]}, s_{[n]})  \in {\Omega _{{W_n}}}} \mu_{n}((\sigma_{n-1}, s_{n-1}) \vee (\sigma_{[n]}, s_{[n]})) = \mu_{n-1}(\sigma_{n-1}, s_{n-1}).
\end{equation}
\end{proof}
\section{Translation-invariant Gibbs measures}
Let $\textbf{h}_x=\textbf{h}$ for all $x\in V$. The measure corresponding to $\mu_{\textbf{h}}$ is called \emph{translation-invariant} Gibbs measure (TIGM). Let $\theta_{I}(x,y)=\theta_{I}$, $\theta_{P}(x,y)=\theta_{P}$ for all $\langle x,y\rangle\in L_n$, i.e. they be constants.
According to \eqref{funceq} we have

\begin{equation}\label{funceqti}z_{i,j}=\left(\frac{\theta_I^{\alpha (1-i)}\left(\sum\limits_{v=1}^{q}\left(\theta_I^{2\alpha i}z_{1,v}+z_{-1,v} \right)+\left(\theta_P^{1-\alpha}-1\right)\left(\theta_I^{2\alpha i}z_{1,j}+z_{-1,j}\right) \right)}{\sum\limits_{v=1}^{q-1}\left(z_{1,v}+\theta_I^{2\alpha }z_{-1,v} \right)+\theta_P^{1-\alpha}\left(z_{1,q}+\theta_{I}^{2\alpha }\right)}\right)^k.
\end{equation}

Let $\textbf{z}=(z,z,...,z)$. Then \eqref{funceqti}  follows that

\begin{equation}\label{sys fa1}
\left\{
\begin{array}{llll}
\ z=\left(\frac{z(\theta_{I}^{\alpha }+\theta_{I}^{-\alpha })(\theta_{P}^{1-\alpha}+q-2)+\theta_I^{\alpha }z+\theta_I^{-\alpha }}{z(\theta_{I}^{\alpha }+\theta_{I}^{-\alpha })(q-1)+\theta_{P}^{1-\alpha}(\theta_I^{-\alpha }z+\theta_I^{\alpha })}\right)^k,\\[3mm]
\ z=\left(\frac{z(\theta_{I}^{\alpha }+\theta_{I}^{-\alpha })(\theta_{P}^{1-\alpha}+q-2)+\theta_I^{-\alpha }z+\theta_I^{\alpha }}{z(\theta_{I}^{\alpha }+\theta_{I}^{-\alpha })(q-1)+\theta_{P}^{1-\alpha}(\theta_I^{-\alpha }z+\theta_I^{\alpha })}\right)^k.\\[3mm]
\end{array}\right.
\end{equation}

Not complicated calculation shows that the system \eqref{sys fa1} has a unique $z=1$ solution.
\begin{remark}\label{trivial} \eqref{funceqti} has the solution $z=(1,1,...,1)$.
\end{remark}
Let operator $W:\mathbb R^{2q-1}\rightarrow  \mathbb R^{2q-1}$, i.e., $$W\left((z_{1,1}, z_{1,2},...,z_{1,q},z_{-1,1}, z_{-1,2},...,z_{-1,q-1})\right)=$$$$(z'_{1,1}, z'_{1,2},...,z'_{1,q},z'_{-1,1}, z'_{-1,2},...,z'_{-1,q-1})$$ defined as

$$z'_{i,j}=$$
\begin{equation}\label{funceqtiop}\left(\frac{\sum\limits_{v=1}^{q}\left(\theta_I^{\alpha i}z_{1,v}+\theta_{I}^{-\alpha i}z_{-1,v} \right)+\left(\theta_P^{1-\alpha}-1\right)\left(\theta_I^{\alpha i}z_{1,j}+\theta_{I}^{-\alpha i}z_{-1,j}\right) }{\sum\limits_{v=1}^{q-1}\left(\theta_I^{-\alpha}z_{1,v}+\theta_I^{\alpha }z_{-1,v} \right)+\theta_P^{1-\alpha}\left(\theta_{I}^{-\alpha }z_{1,q}+\theta_{I}^{\alpha }\right)}\right)^k.
\end{equation}

It is easy to see that the following sets are invariant respect to the operator $W$

$$I_1=\{\textbf{z}=(\underbrace{z,z,...,z,1}_q,z,z,...,z)\in\mathbb R^{2q-1}\},$$
$$I_2=\{\textbf{z}=(\underbrace{z_1,...,z_1,1}_q,z_2,...,z_2)\in\mathbb R^{2q-1}\}.$$

\textbf{Case 1.}  Now, we shall consider \eqref{funceqti} on the set $I_1$. Then we have
\begin{equation}\label{funceqtii1}
z=\left(\frac{z(\theta_{I}^{\alpha }+\theta_{I}^{-\alpha })(\theta_{P}^{1-\alpha}+q-2)+\theta_I^{\alpha }+\theta_I^{-\alpha }}{(\theta_{I}^{\alpha }+\theta_{I}^{-\alpha })\left(z(q-1)+\theta_{P}^{1-\alpha}\right)}\right)^k,
\end{equation}
or
\begin{equation}\label{funceqtii1}
z=\left(\frac{z(\theta_{P}^{1-\alpha}+q-2)+1}{z(q-1)+\theta_{P}^{1-\alpha}}\right)^k.
\end{equation}
Denote $x:=z(\theta_P^{1-\alpha}+q-2),\ A:=\frac{(q-1)^k}{\left(\theta_p^{1-\alpha}+q-2\right)^{k+1}},\ B:=\frac{\theta_p^{1-\alpha}(\theta_p^{1-\alpha}+q-2)}{q-1}$,\\ then \eqref{funceqtii1} yields that
\begin{equation}\label{funceqtii11}
Ax=\left(\frac{1+x}{B+x}\right)^k,
\end{equation}
here $A>0,\ B>0,\ k\in \mathbb N, x\geq0$.

Denote $$\theta_c=$$$$\theta_c(k,q,\alpha)=\left(\frac{\sqrt{(q-2)^2(k-1)^2+4(q-1)(k+1)^2}-(q-2)(k-1)}{2(k-1)}\right)^{\frac{1}{1-\alpha}}.$$

The presence of at least two distinct measures for the model indicates the occurrence of a \emph{phase transition}.

\eqref{funceqtii11} is well-studied in \cite{Preston}, using that results we have immediately the following theorem

\begin{theorem}\label{nti} The following assertions hold
 \begin{itemize}
   \item if $\theta_P\leq \theta_c$ or $k=1$ then the equation \eqref{funceqtii11} has a unique positive solution. Moreover, in this case, for the model \eqref{Ham} there does not exist a phase transition.
   \item if $\theta_P>\theta_c$ and $k>1$ then there exist $\eta_1(B,k)$, $\eta_2(B,k)$ with $0<\eta_1(B,k)<\eta_2(B,k)$ such that the equation \eqref{funceqtii11} has
   \begin{enumerate}
     \item three positive solutions if $A\in (\eta_1, \eta_2)$;
     \item two positive solutions if $A\in \{\eta_1, \eta_2\}$;
      \end{enumerate}
   In fact,
 \begin{equation}\label{xi}
 \eta_i=\eta_i(B,k)=\frac{1}{x_i}\left(\frac{1+x_i}{B+x_i}\right)^k,
 \end{equation}
 where $x_1,x_2$ are solutions of the following quadratic equation
 \end{itemize}
 $$x^2+[2-(B-1)(k-1)]x+B=0.$$
 Furthermore, in this case,  for the model \eqref{Ham} there exists a phase transition.
\end{theorem}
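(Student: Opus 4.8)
The plan is to reduce \eqref{funceqtii11} to a graphical problem and then invoke (or reprove) the standard analysis of \cite{Preston}. Set
$$f(x)=\frac{1}{x}\left(\frac{1+x}{B+x}\right)^{k},\qquad x\in(0,\infty),$$
and note that $x=0$ is never a root of \eqref{funceqtii11} (its left side vanishes while its right side equals $B^{-k}>0$); hence $x>0$ solves \eqref{funceqtii11} exactly when $A=f(x)$, so the number of positive solutions of \eqref{funceqtii11} is the number of intersections of the graph of $f$ with the horizontal line at height $A$. I would first record $f(x)\to+\infty$ as $x\to0^{+}$ and $f(x)\to0$ as $x\to+\infty$, so that whenever $f$ is strictly decreasing it is a bijection of $(0,\infty)$ onto itself and \eqref{funceqtii11} has a unique positive solution for every $A>0$.

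The next, and principal, step is to locate the critical points of $f$. Writing $g(x)=\ln f(x)=-\ln x+k\ln(1+x)-k\ln(B+x)$ one gets
$$g'(x)=-\frac{1}{x}+\frac{k(B-1)}{(1+x)(B+x)},$$
and after multiplying through by $x(1+x)(B+x)>0$ the sign of $g'(x)$ is the sign of $-\bigl(x^{2}+[2-(B-1)(k-1)]x+B\bigr)$. Thus the critical points of $f$ on $(0,\infty)$ are exactly the positive roots of the quadratic in the statement, and $g'$ is negative as $x\to 0^{+}$ and as $x\to+\infty$. Because the product of the roots of this quadratic equals $B>0$, its roots are both positive, both negative, or non-real; discussing the discriminant together with the sign of the coefficient $2-(B-1)(k-1)$ shows that the quadratic has two distinct positive roots $x_{1}<x_{2}$ precisely when $k>1$ and $B>\bigl(\tfrac{k+1}{k-1}\bigr)^{2}$, and that otherwise $g'<0$ on all of $(0,\infty)$ (with at most one degenerate zero on the threshold). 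I would then translate this threshold into the stated form: since $B=\dfrac{\theta_{P}^{1-\alpha}(\theta_{P}^{1-\alpha}+q-2)}{q-1}$ is strictly increasing in $\theta_{P}$, it is enough to check that $B=\bigl(\tfrac{k+1}{k-1}\bigr)^{2}$ is equivalent to $(k-1)^{2}\theta_{P}^{1-\alpha}(\theta_{P}^{1-\alpha}+q-2)=(q-1)(k+1)^{2}$, and that the positive root of this quadratic in $\theta_{P}^{1-\alpha}$ is exactly $\theta_{c}(k,q,\alpha)$; hence $B\lessgtr\bigl(\tfrac{k+1}{k-1}\bigr)^{2}\iff\theta_{P}\lessgtr\theta_{c}$.

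The case analysis is then immediate. If $\theta_{P}\le\theta_{c}$ or $k=1$, then $g'\le 0$ on $(0,\infty)$, $f$ is a strictly decreasing bijection onto $(0,\infty)$, and \eqref{funceqtii11} has exactly one positive solution. If $\theta_{P}>\theta_{c}$ and $k>1$, then $g'<0$ on $(0,x_{1})$, $g'>0$ on $(x_{1},x_{2})$, $g'<0$ on $(x_{2},\infty)$, so $f$ decreases from $+\infty$ to the local minimum $\eta_{1}:=f(x_{1})$, increases to the local maximum $\eta_{2}:=f(x_{2})$ (automatically $\eta_{1}<\eta_{2}$, since $f$ is strictly increasing on $(x_1,x_2)$), then decreases to $0$; by the intermediate value theorem and strict monotonicity on each of the three pieces, the line $y=A$ meets the graph three times if $A\in(\eta_{1},\eta_{2})$, twice if $A\in\{\eta_{1},\eta_{2}\}$, and once otherwise, which is the assertion, with $\eta_{i}=f(x_{i})=\frac{1}{x_{i}}\bigl(\tfrac{1+x_{i}}{B+x_{i}}\bigr)^{k}$ as in \eqref{xi}.

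For the phase-transition statements I would use Theorem~\ref{comp.eq}: each positive solution $x$ of \eqref{funceqtii11} gives, through $z=x/(\theta_{P}^{1-\alpha}+q-2)$, a point of the invariant set $I_{1}$ solving the fixed-point system \eqref{funceqti}, hence a splitting Gibbs measure, and different solutions produce different measures; thus two or more positive solutions of \eqref{funceqtii11} yield at least two translation-invariant Gibbs measures, i.e. a phase transition, while a single solution corresponds only to the trivial point $z=1$. The main obstacle I anticipate is purely computational: the identification of the discriminant threshold $B=\bigl(\tfrac{k+1}{k-1}\bigr)^{2}$ with $\theta_{P}=\theta_{c}$ through the explicit formula for $\theta_{c}$, and --- for the phase-transition half of the second bullet --- locating the concrete value $A=\dfrac{(q-1)^{k}}{(\theta_{P}^{1-\alpha}+q-2)^{k+1}}$ relative to $\eta_{1}$ and $\eta_{2}$; the rest is the elementary one-variable monotonicity analysis above (equivalently, a direct appeal to \cite{Preston}).
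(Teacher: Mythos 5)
Your proposal is correct and is, in substance, exactly the argument the paper outsources: the paper's entire ``proof'' is the one-line citation ``\eqref{funceqtii11} is well-studied in \cite{Preston}'', whereas you actually carry out that analysis --- the logarithmic derivative, the reduction of the sign of $f'$ to the quadratic $x^2+[2-(B-1)(k-1)]x+B$, the threshold $B=\bigl(\tfrac{k+1}{k-1}\bigr)^2$, and its translation into $\theta_P=\theta_c$ all check out (in particular $2-(B-1)(k-1)=B+1-k(B-1)$, matching your computation). So you have supplied the details the paper omits, along the same route.

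One remark on the caveat you raise at the end: it is not merely a computational loose end but a genuine issue with the theorem's final sentence as stated. The condition $\theta_P>\theta_c$ only guarantees that the graph of $f$ is S-shaped ($\eta_1<\eta_2$ exist); whether the model actually has several measures depends on whether the specific value $A=\tfrac{(q-1)^k}{(\theta_P^{1-\alpha}+q-2)^{k+1}}$ lands in $[\eta_1,\eta_2]$, and it need not. Indeed, for $k=2$, $q=3$ your threshold gives $t_c=\tfrac{\sqrt{73}-1}{2}\approx 3.772$ for $t=\theta_P^{1-\alpha}$, while the paper's own Theorem~\ref{k2ti} shows multiplicity only for $t\ge 1+2\sqrt{2}\approx 3.828$; in the gap between these two values the equation is S-shaped yet has a unique positive solution. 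So the phase-transition conclusion should be read as conditional on $A\in(\eta_1,\eta_2)$ (or $\{\eta_1,\eta_2\}$), exactly as you suspected; neither the paper nor the citation to \cite{Preston} closes this, and your proof is the honest version of the statement.
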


\textbf{Subcase $k=2$.}  In this subcase we shall consider the equation \eqref{funceqtii1} for $k=2$. Then we have

\begin{equation}\label{k21}
z=\left(\frac{z(\theta_{P}^{1-\alpha}+q-2)+1}{z(q-1)+\theta_{P}^{1-\alpha}}\right)^2.
\end{equation}
Using notation $b:=\theta_{P}^{1-\alpha}$, we rewrite \eqref{k21} as follows
\begin{equation}\label{k22}
(z-1)\left((q-1)^2z^2-\left((b-1)^2-2(q-1)\right)z+1\right)=0.
\end{equation}
The solutions of \eqref{k22} are
\begin{equation}\label{k23}
z_0=1,\ z_{1,2}=\frac{(b-1)^2-2(q-1)\pm\mid b-1\mid\sqrt{(b-1)^2-4(q-1)}}{2(q-1)^2}.
\end{equation}
\begin{theorem}\label{k2ti}
Let $k=2$. For the translation- invariant Gibbs measures of the $(2,q)$-Ising-Potts model, corresponding to the solutions of \eqref{funceqti} on the invariant set $I_1$, the following assertions hold
\begin{itemize}
  \item If $\theta_{P}>\left(1+2\sqrt{q-1}\right)^{\frac{1}{1-\alpha}}$ then there are 3 TIGMs;
  \item If $\theta_{P}=\left(1+2\sqrt{q-1}\right)^{\frac{1}{1-\alpha}}$ then there are 2 TIGMs;
  \item If $0<\theta_{P}<\left(1+2\sqrt{q-1}\right)^{\frac{1}{1-\alpha}}$ then there exists a unique TIGM.
\end{itemize}
\end{theorem}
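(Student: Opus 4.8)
The plan is to reduce the count of translation-invariant splitting Gibbs measures supported on $I_1$ to the count of distinct positive roots of a single scalar equation. A point of $I_1$ is parametrized by one scalar $z>0$, and on $I_1$ the translation-invariant functional equation \eqref{funceqti} collapses (all the $\theta_I$-factors cancel) to the scalar relation \eqref{k21}; clearing denominators there and writing $b:=\theta_P^{1-\alpha}$ gives the factored cubic \eqref{k22}, $(z-1)\big((q-1)^2z^2-((b-1)^2-2(q-1))z+1\big)=0$, whose roots are listed in \eqref{k23}. By Theorem~\ref{comp.eq} every positive solution of \eqref{funceqti} yields a splitting Gibbs measure, which is translation-invariant since $\mathbf h_x\equiv\mathbf h$, and distinct positive solutions give pairwise distinct measures because the map $z\mapsto\mathbf h\mapsto\mu_{\mathbf h}$ is injective. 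So the theorem reduces to counting the distinct positive roots of \eqref{k22} as $\theta_P$ varies.

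First I would isolate the root $z_0=1$, which occurs for every parameter value (Remark~\ref{trivial}); thus at least one TIGM on $I_1$ always exists. The remaining candidates solve $Q(z):=(q-1)^2z^2-\big((b-1)^2-2(q-1)\big)z+1=0$. Computing and simplifying the discriminant of $Q$ gives $D=(b-1)^2\big((b-1)^2-4(q-1)\big)$, so $Q$ has two real roots exactly when $(b-1)^2\ge 4(q-1)$. Since $b=\theta_P^{1-\alpha}>0$ while $1-2\sqrt{q-1}<0$ for $q\ge 3$, the only admissible branch is $b\ge 1+2\sqrt{q-1}$, and because $t\mapsto t^{1-\alpha}$ is strictly increasing for $\alpha\in[0,1)$ this is precisely $\theta_P\ge\big(1+2\sqrt{q-1}\big)^{1/(1-\alpha)}$.

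Assume $b\ge 1+2\sqrt{q-1}$. By Vieta's relations for $Q$ the product of its two roots is $(q-1)^{-2}>0$ and their sum is $\big((b-1)^2-2(q-1)\big)(q-1)^{-2}>0$ (here $(b-1)^2\ge4(q-1)>2(q-1)$), so both roots $z_1,z_2$ in \eqref{k23} are positive. At the threshold $b=1+2\sqrt{q-1}$ one has $D=0$ and $z_1=z_2=1/(q-1)$, which is $\ne 1$ since $q\ge 3$; hence exactly two distinct positive solutions, $\{1,\,1/(q-1)\}$, i.e. two TIGMs. For $b>1+2\sqrt{q-1}$ we have $z_1\ne z_2$, and a short computation (via $Q(1)=q^2-(b-1)^2$) confirms $z_{1,2}\ne 1$, so $\{1,z_1,z_2\}$ are three distinct positive solutions, i.e. three TIGMs. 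Finally, when $0<b<1+2\sqrt{q-1}$ the quadratic $Q$ has no real root, so $z_0=1$ is the only positive solution and there is a unique TIGM. Pulling the three $b$-regimes back to $\theta_P$ through the monotone substitution produces the three bullets of the statement.

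Most of the algebra above is already on the table — the factorization \eqref{k22} and the closed form \eqref{k23} are displayed before the statement — so the only genuine work is the bookkeeping in the last two paragraphs: the clean collapse of the discriminant of $Q$ to $(b-1)^2\big((b-1)^2-4(q-1)\big)$, the Vieta sign check giving positivity, and the verification that no spurious coincidence occurs among $z_0,z_1,z_2$ (in particular that the threshold $b=1+2\sqrt{q-1}$ really is a double root, counted once). I expect this positivity/distinctness analysis, together with the standard fact that distinct fixed points of \eqref{funceqti} correspond to distinct Gibbs measures, to be the main point rather than any hard estimate.
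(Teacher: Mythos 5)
Your proposal follows the paper's own route exactly: both reduce the count of TIGMs on $I_1$ to counting the distinct positive roots of the factored cubic \eqref{k22}, and both hinge on the sign of $\Delta=(b-1)^2-4(q-1)$ for the quadratic factor. Your added details (the collapse of the discriminant of $Q$ to $(b-1)^2\bigl((b-1)^2-4(q-1)\bigr)$, the Vieta positivity check, the double root $1/(q-1)\neq 1$ at the threshold) are all correct and fill in what the paper dismisses as ``straightforward.''

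There is, however, one concrete error in your distinctness step. You claim that $Q(1)=q^2-(b-1)^2$ ``confirms $z_{1,2}\neq 1$,'' but that identity shows the opposite at one parameter value: $Q(1)=0$ exactly when $b=q+1$, and since $q+1>1+2\sqrt{q-1}$ for every $q\geq 3$ (this is $(q-2)^2>0$), the value $b=q+1$ lies strictly inside your ``three solutions'' regime. There one computes $z_1=1=z_0$ and $z_2=1/(q-1)^2$, so the cubic \eqref{k22} has only two distinct positive roots and hence only two TIGMs on $I_1$. Thus the first bullet of the theorem needs the exceptional value $\theta_P=(q+1)^{1/(1-\alpha)}$ excluded (or the roots counted with multiplicity). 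To be fair, the paper's own proof glosses over the same degeneracy, so this is as much a gap in the stated theorem as in your write-up; apart from this single point your argument is sound and matches the paper's.
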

\begin{proof}
Denote $$\Delta=(b-1)^2-4(q-1).$$
Clear that if $\Delta\geq 0$ then there exist $z_{1,2}$ and both are positive. The condition $\Delta\geq 0$ is equivalent to $\theta_{P}\geq\left(1+2\sqrt{q-1}\right)^{\frac{1}{1-\alpha}}.$ The rest of the proof is straightforward.
\end{proof}
\begin{remark}\label{com}
\begin{itemize}
  \item If $\alpha\rightarrow 1$, \eqref{Ham} drives Ising model. According to Theorem \ref{k2ti},  there is always at least one Gibbs measure for the Ising model on the Cayley tree of order two.
  \item If $\alpha\rightarrow 0$, \eqref{Ham} drives Potts model. Again due to Theorem \ref{k2ti},
  \begin{itemize}
    \item if $\theta_{P}>\left(1+2\sqrt{q-1}\right)$ then there exist at least three Gibbs measures,
    \item if $\theta_{P}=\left(1+2\sqrt{q-1}\right)$ then there exist at least two Gibbs measures,
    \item if $0<\theta_{P}<\left(1+2\sqrt{q-1}\right)$ then there exists at least one Gibbs measure
  \end{itemize}
for the Potts model on the Cayley tree of order two.
\end{itemize}
 \end{remark}

\begin{remark}\label{compare}
For $k=2$, without Theorem \ref{k2ti}, using Theorem \ref{nti}, we can find number of positive solutions of \eqref{k21}. However, using explicit form of solutions, we can later check  extremality of the measures.
\end{remark}
\textbf{Case 2.} In this case, we shall consider \eqref{funceqti} on the set $I_2$. For simplicity, let $k=2$.  The equation \eqref{funceqti} follows that
\begin{equation}\label{sys i21}
\left\{
\begin{array}{llll}
\ z_{1}=\left(\frac{(\theta_I^{\alpha}z_1+\theta_I^{-\alpha}z_2)(q+\theta_P^{1-\alpha}-2)+\theta_I^{\alpha}+\theta_I^{-\alpha}}{(\theta_I^{-\alpha}z_1+\theta_I^{\alpha}z_2)(q-1)+\theta_P^{1-\alpha}(\theta_I^{\alpha}+\theta_I^{-\alpha})}\right)^2,\\[3mm]
\ z_{2}=\left(\frac{(\theta_I^{-\alpha}z_1+\theta_I^{\alpha}z_2)(q+\theta_P^{1-\alpha}-2)+\theta_I^{\alpha}+\theta_I^{-\alpha}}{(\theta_I^{-\alpha}z_1+\theta_I^{\alpha}z_2)(q-1)+\theta_P^{1-\alpha}(\theta_I^{\alpha}+\theta_I^{-\alpha})}\right)^2.\\[3mm]
\end{array}\right.
\end{equation}

\begin{remark}\label{positive}
Let $(z_1,z_2)$ be any solution of \eqref{sys i21}. Since RHSs of \eqref{sys i21} are positive, both $z_1$ and $z_2$ are positive.
\end{remark}

Using the notation $u_1=\sqrt{z_1},\ u_2=\sqrt{z_2}$ then we have

\begin{equation}\label{sys i22}
\left\{
\begin{array}{llll}
\ u_{1}\left[(\theta_I^{-\alpha}u_1^2+\theta_I^{\alpha}u_2^2)(q-1)+\theta_P^{1-\alpha}(\theta_I^{\alpha}+\theta_I^{-\alpha})\right]=\\
(\theta_I^{\alpha}u_1^2+\theta_I^{-\alpha}u_2^2)(q+\theta_P^{1-\alpha}-2)+\theta_I^{\alpha}+\theta_I^{-\alpha},\\[3mm]
\ u_{2}\left[(\theta_I^{-\alpha}u_1^2+\theta_I^{\alpha}u_2^2)(q-1)+\theta_P^{1-\alpha}(\theta_I^{\alpha}+\theta_I^{-\alpha})\right]=\\
(\theta_I^{-\alpha}u_1^2+\theta_I^{\alpha}u_2^2)(q+\theta_P^{1-\alpha}-2)+\theta_I^{\alpha}+\theta_I^{-\alpha}.\\[3mm]
\end{array}\right.
\end{equation}
Denote $a=\theta_I^{\alpha}$, $b=\theta_P^{1-\alpha}$.
We find $u_2^2$ from the first equation of \eqref{sys i22}
\begin{equation}\label{u2}
u_2^2=\frac{-(q-1)u_1^3+a^2(q+b-2)u_1^2-(a^2+1)bu_1+u_1^2+1}{a^2(q-1)u_1-(q+b-2)}.
\end{equation}
Substituting \eqref{u2} into the second equation of \eqref{sys i22} and after some algebras we get
$$
(u_1-1)(a^2(q-1)u_1-(b+q-2))^2(-(q-1)u_1^2+(b-1)u_1-1)$$
\begin{equation}\label{sys i23}\cdot(A_1u_1^4+B_1u_1^3+C_1u_1^2+D_1u_1+E_1)=0,
\end{equation}
where $$A_1=A_1(a,b,q)=-(q-1)^2(b+q-2)^2(a^2-1)^2,$$ $$B_1=B_1(a,b,q)=(q-1)(b+q-2)^3(a^2-1)^3,$$
$$C_1=C_1(a,b,q)=-(b+q-2)(a^2-1)$$
$$\cdot[(a^2-1)b^3+((q-1)a^4+(2q-5)a^2-5q+8)b^2+$$
$$+((q^2-3q+2)a^4+(2q^2-9q+10)a^2-5q^2+18q-16)b+$$$$(q^3-4q^2+8q-6)a^2-q^3+6q^2-12q+8],$$
$$D_1=D_1(a,b,q)=(q+b-2)^2(a^2-1)^2$$$$[(a^2+1)b^2+(q-2)(a^2+1)b+a^2(a^2-3)(q-1)],$$
$$E_1=E_1(a,b,q)=-(a^2+1)b^4+(-a^4+(-2q+6)a^2-2q+3)b^3+$$$$((-q+4)a^4+(-q^2+12q-18)a^2-q^2+q+2)b^2-$$
$$-(q-2)((q-4)a^4-2(4q-7)a^2+3(q-2))b-(q-1)^2a^6+$$$$(-q^3+5q^2-10q+7)a^4+2(q-2)^3a^2-(q-2)^3.$$

The equation \eqref{sys i23} has $u_0^{(1)}=1$, $$u_{1,2}^{(1)}=\frac{b-1\pm\sqrt{(b-1)^2-4(q-1)}}{2(q-1)}$$ and $$u_3^{(1)}=\frac{b+q-2}{a^2(q-1)}$$ solutions.

Now, we consider the equation
\begin{equation}\label{sys i24}
f(u_1, a,b,q):=A_1u_1^4+B_1u_1^3+C_1u_1^2+D_1u_1+E_1=0.
\end{equation}

For simplicity, let we consider
\begin{equation}\label{aa3}
f(x,a,a,3)=0.
\end{equation}
($a=b$ follows that $J_I=\frac{\alpha}{1-\alpha}J_p$.) \eqref{aa3} is equivalent to

$$-4(a+1)^4x^4+2(a-1)(a+1)^6x^3-(a+1)^2(2a^5+5a^4+6a^3+6a^2+8a+1)x^2+$$\begin{equation}\label{aa31}+a(3a^2+4a-1)(a+1)^3x-((a+1)^5+a^2)=0.
\end{equation}

\begin{lemma}\label{numsol} Let $\mathcal N$ be a number of positive solutions of \eqref{aa31}. Then the following assertions hold
\begin{equation}\label{x}
\mathcal N=\left\{
	\begin{array}{ll}
		 0,\ \ \ \ \mbox{if} \ \ a\in (0, A^c_1); \\[2mm]
		 2,\ \ \ \ \mbox{if} \ \ a\in [A^c_1, A^c_2); \\[2mm]
 4,\ \ \ \ \mbox{if} \ \ a\in [A^c_2, +\infty),
	\end{array}\right.
\end{equation}
where $A^c_1\approx2,010\, \ A^c_2\approx4,921$.
\end{lemma}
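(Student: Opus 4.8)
The plan is to treat \eqref{aa31} as a one–parameter family of quartics $f(x)=f(x,a,a,3)$ in $x$ and to count positive roots by a discriminant argument supplemented with test points. The first thing I would record are two ``rigidity'' facts. For every $a>0$ the leading coefficient $-4(a+1)^{4}$ and the constant term $-((a+1)^{5}+a^{2})$ are both negative, so $f(0)<0$ and $f(x)\to-\infty$ as $x\to+\infty$; in particular $x=0$ is never a root of $f$, hence a positive root can never be born or destroyed across $x=0$, and since $f$ is negative at both ends of $(0,+\infty)$ its positive roots occur in pairs (with multiplicity). Thus $\mathcal N\in\{0,2,4\}$. A glance at the sign sequence of the coefficients already gives part of the statement: when $0<a<\frac{\sqrt{7}-2}{3}$ all five coefficients are negative, so $\mathcal N=0$ by Descartes' rule; when $\frac{\sqrt{7}-2}{3}<a\le 1$ the pattern is $(-,-,-,+,-)$, forcing $\mathcal N\in\{0,2\}$; and when $a>1$ it is $(-,+,-,+,-)$, allowing $\mathcal N\in\{0,2,4\}$ — all consistent with the asserted thresholds $2{.}010$ and $4{.}921$.

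The heart of the proof is to locate the values of $a$ at which $\mathcal N$ jumps. Since no root crosses $x=0$, and the complex roots of $f$ depend continuously on $a$ with all roots simple off the zero set of the discriminant $D(a):=\mathrm{Disc}_{x}f$, the quantity $\mathcal N$ is constant on each connected component of $\{a>0:\ D(a)\ne 0\}$; moreover, across a zero of $D$ at which the coalescing root is non-positive or non-real, $\mathcal N$ is unchanged. So I would compute $D(a)$ — a polynomial in $a$, because the coefficients of $f$ are polynomials in $a$ — factor it, isolate its positive real zeros with certified bounds, and keep only those at which the repeated root of $f$ is positive. The claim is that precisely two survive, $a=A^{c}_{1}\approx 2{.}010$ and $a=A^{c}_{2}\approx 4{.}921$, producing the transitions $\mathcal N:0\to 2$ at $A^{c}_{1}$ and $\mathcal N:2\to 4$ at $A^{c}_{2}$. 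To fix the value on each of the three intervals it then suffices to evaluate $\mathcal N$ at one point of each: in $(0,A^{c}_{1})$ take $a<\frac{\sqrt{7}-2}{3}$, where Descartes already gives $0$ (and the absence of a zero of $D$ in $(0,A^{c}_{1})$ propagates this to the whole interval); for $(A^{c}_{1},A^{c}_{2})$ take $a=3$ and exhibit exactly two sign changes of $f$ on $(0,+\infty)$ by evaluating at a handful of points; for $(A^{c}_{2},+\infty)$ take $a=5$ and exhibit four sign changes; a direct check at $a=A^{c}_{1}$ and $a=A^{c}_{2}$ places the endpoints as in \eqref{x}.

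The main obstacle is computational and lies entirely in the discriminant step: $D(a)$ is the discriminant of a quartic whose coefficients are polynomials of degree up to $7$ in $a$, hence a polynomial of large degree, and one must factor it, prove it has exactly the two relevant positive zeros, and isolate $A^{c}_{1},A^{c}_{2}$ rigorously. One must also be careful, at each candidate zero of $D$, to confirm that the coalescing root is genuinely positive — this is what distinguishes a jump of the \emph{positive}-root count from a jump of the total real-root count — and to exclude any further positive zero of $D$ below $A^{c}_{2}$, which is exactly what makes $\mathcal N$ constant (equal to $0$, then $2$) on the two intervals there. An essentially equivalent route, if one prefers to avoid the quartic discriminant directly, is to view \eqref{aa31} as an algebraic curve in the $(x,a)$–plane and to locate its vertical–tangent points via $\mathrm{Res}_{x}(f,\partial_{x}f)=0$; the bookkeeping and the one genuinely hard step are the same.
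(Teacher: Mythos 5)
Your strategy is sound, but it takes a genuinely different route from the paper's. The paper applies Sturm's theorem head-on: it constructs the Sturm sequence $P_0,\dots,P_4$ of the quartic in \eqref{aa31}, tracks the sign of each $P_i$ at $x=0$ and at $x=+\infty$ as a function of $a$ (this is where the twelve critical values $a_1^c,\dots,a_{12}^c$ of Schema~1 come from), and reads off $\mathcal N=\omega(0)-\omega(+\infty)$ on each resulting subinterval. You instead fix the parity first ($\mathcal N\in\{0,2,4\}$ with multiplicity, since the leading coefficient and the constant term are negative for all $a>0$), use continuity of the roots together with the non-vanishing of the leading coefficient and of $f(0)$ to conclude that $\mathcal N$ can only jump at positive zeros of $D(a)=\mathrm{Disc}_x f$ at which the coalescing root is positive, and then pin down the value on each component by Descartes' rule and test points. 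The two routes are in fact close relatives: the final term $P_4$ of the paper's Sturm sequence is, up to explicitly positive factors, exactly the discriminant $D(a)$, so your $A^c_1,A^c_2$ are among the sign-change points the paper already tabulates. Your version is leaner -- one high-degree polynomial in $a$ to analyse plus a local check of where each double root sits -- while the Sturm route certifies the count on every interval without any deformation argument and without having to argue separately that no discriminant zero with a positive double root has been overlooked; both ultimately delegate the decisive numerics to computation. One point you should make explicit: at $a=A^c_1$ and $a=A^c_2$ the new pair of roots appears as a single double root, so the number of \emph{distinct} positive solutions there is $1$ and $3$, not $2$ and $4$; your ``direct check at the endpoints'' must therefore state that \eqref{x} counts roots with multiplicity (the paper's own proof has the same ambiguity, since its closing sentence assigns $a=a_7^c$ to the zero-solution regime while the lemma's interval $[A^c_1,A^c_2)$ assigns it the value $2$).
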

\begin{proof}
Denote $$P(x)=-4(a+1)^4x^4+2(a-1)(a+1)^6x^3-(a+1)^2(2a^5+5a^4+6a^3+6a^2+8a+1)x^2+$$$$+a(3a^2+4a-1)(a+1)^3x-((a+1)^5+a^2),$$
or to shorten notation, we write
$$P(x)=A_1x^4+B_1x^3+C_1x^2+D_1x+E_1.$$

We are going to apply Sturm's Theorem \cite{Prasolov} to $P(x)$.
$$P_0(x)=P(x),\, P_1(x)=P'(x)=4A_1x^3+3B_1x^2+2C_1x+D_1.$$
Now, we perform Euclidean division to find $P_2(x)$. Divide $P_0(x)$ by $P_1(x)$.
$$A_1x^4+B_1x^3+C_1x^2+D_1x+E_1=\left(\frac{1}{4}x+\frac{B_1}{16A_1}\right)\left(4A_1x^3+3B_1x^2+2C_1x+D_1\right)+$$
$$+\left(\frac{C_1}{2}-\frac{3B_1^2}{16A_1}\right)x^2+\left(\frac{3}{4}D_1-\frac{B_1C_1}{8A_1}\right)x+\left(E_1-\frac{B_1D_1}{16A_1}\right).$$
$$P_2(x)=\left(\frac{3B_1^2}{16A_1}-\frac{C_1}{2}\right)x^2+\left(\frac{B_1C_1}{8A_1}-\frac{3}{4}D_1\right)x+\left(\frac{B_1D_1}{16A_1}-E_1\right).$$
We find remainder and $P_3(x)$, dividing $P_1(x)$ by $P_2(x)$ as follows
$$P_3(x)=\frac{16A_1}{(8A_1C_1-3B_1^2)^2}\cdot$$$$[(2 B_1^2 C_1^2 - 8 A_1 C_1^3 - 6 B_1^3 D_1 + 28 A_1 B_1 C_1 D_1 -36 A_1^2 D_1^2 -
   12 A_1 B_1^2 E_1 + 32 A_1^2 C_1 E_1)x$$ $$+(B_1^2C_1 D_1 - 4 A_1 C_1^2 D_1 + 3A_1B_1D_1^2 - 9B_1^3E_1 +32A_1 B_1 C_1 E_1 -
   48 A_1^2D_1 E_1)].$$
We find remainder and $P_4(x)$, dividing $P_2(x)$ by $P_3(x)$ as follows
$$P_4(x)=\frac{1}{64A_1}\cdot$$
$$\left(\frac{8A_1C_1-3B_1^2}{B_1^2C_1^2+4A_1C_1^3 + 3B_1^3D_1-14A_1B_1C_1D_1 + 18A_1^2D_1^2 + 6A_1B_1^2 E_1 - 16 A_1^2C_1E_1}\right)^2\cdot$$
$$\cdot(B_1^2C_1^2 D_1^2 -4A_1C_1^3D_1^2 - 4B_1^3D_1^3 + 18A_1B_1C_1D_1^3 -$$
$$- 27 A_1^2 D_1^4 -4B_1^2C_1^3 E_1 + 16 A_1 C_1^4 E_1 + 18 B_1^3C_1D_1E_1 -$$
$$- 80 A_1B_1C_1^2D_1E_1-6 A_1 B_1^2 D_1^2 E_1 + 144A_1^2C_1 D_1^2E_1 - 27B_1^4E_1^2 + 144A_1B_1^2C_1E_1^2 -$$
$$ -128 A_1^2 C_1^2 E_1^2 - 192 A_1^2B_1D_1 E_1^2 + 256A_1^3E_1^3).$$
Now we find signs of the $\left(P_0(x_0),P_1(x_0),P_2(x_0),P_3(x_0),P_4(x_0)\right)$, where $x_0\in\{0,+\infty\}$.
Note that $P_i(+\infty):=\lim\limits_{x\to +\infty}P_i(x)$ ($i=\overline{0,4}$) and sign of $P_i(+\infty)$ equals sign of leading coefficient of $P_i(x)$.
Let $\omega(x_0)$ be the number of sign changes in the sequence $$P_0(x_0),\,P_1(x_0),\,P_2(x_0),\,P_3(x_0),\,P_4(x_0)$$
and $\nu(0,+\infty)$ be the number of positive solutions of \eqref{aa31}. According to Sturm's Theorem $$\nu(0,+\infty)=\omega(0)-\omega(+\infty).$$
Through cumbersome calculations, we obtain the following schema for the sign of $P_i(x_0)$ (see Schema 1).
\begin{figure}[h!]
\includegraphics[width=12cm]{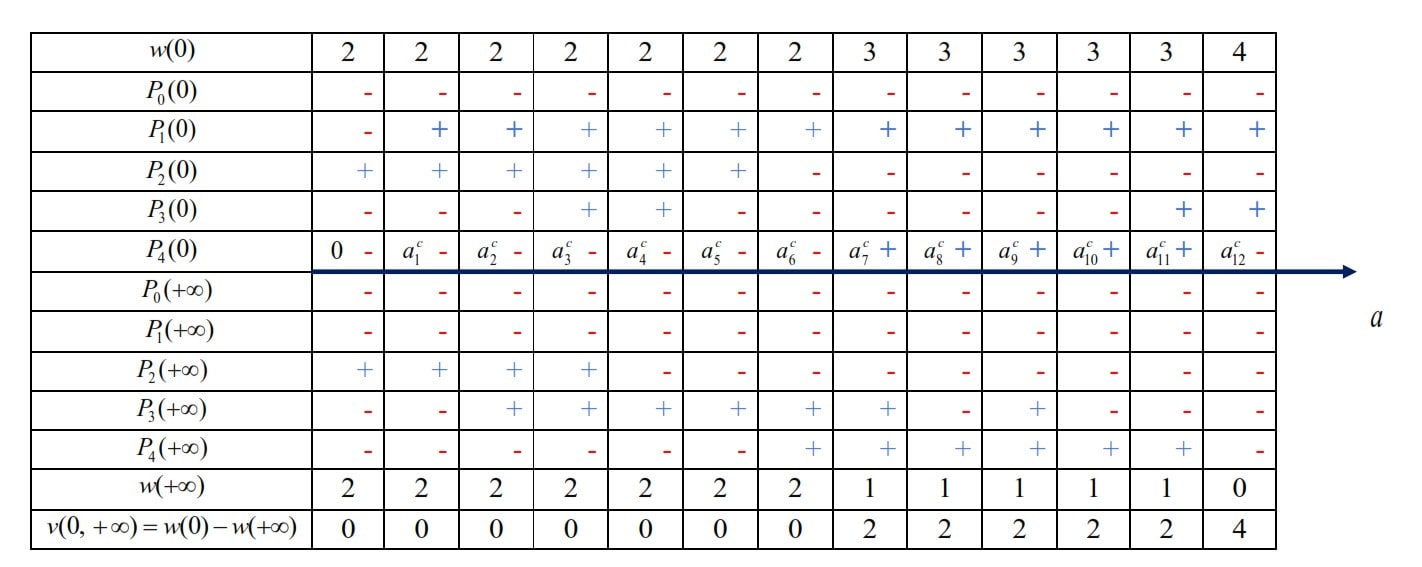}\label{fig1}
\end{figure}
\begin{center}{\footnotesize \noindent
 Schema 1. This schema illustrates the application of Sturm's Theorem to $P(x)$,

 where $a_1^c\approx0,215$, $a_2^c\approx0,592$, $a_3^c\approx1,690$, $a_4^c\approx1,929$, $a_5^c\approx1,947$, $a_6^c\approx1,960$, $a_7^c\approx2,010$, $a_8^c\approx2,035$, $a_9^c\approx2,226$, $a_{10}^c\approx3,609$, $a_{11}^c\approx3,717$, $a_{12}^c\approx4,921$.}
\end{center}

Let us prove statements in schema above for $P_2(x)$. Substituting parametric expressions of the coefficients into  $$P_2(0)=\frac{B_1D_1}{16A_1}-E_1,$$ we get
$$P_2(0)=-\frac{1}{32}(3a^9+16a^8+30a^7+16a^6-52a^5-224a^4-334a^3-352a^2-159a-32).$$
We consider the polynomial $$g_1(a):=3a^9+16a^8+30a^7+16a^6-52a^5-224a^4-334a^3-352a^2-159a-32.$$ By the Descartes Rule \cite{Prasolov}, $g_1(a)$ has at most one positive solution. It is easy to check that $g_1(1)=-1088<0$ and $g_1(2)=818>0$. There exist $a_6^c\in(1,2)$ ($a_6^c\approx1,96077$) such that $g_1(a_6^c)=0$. It is easy to check that if $a\in (0, a_6^c)$ then $P_2(0)>0$ and if $a\in (a_6^c, +\infty)$ then $P_2(0)<0$.

Clear that the sign of $P_2(+\infty)$ equals sign of $\frac{3B_1^2}{16A_1}-\frac{C_1}{2}$. Substituting parametric expressions of the coefficients into the last expression, we have $$-\frac{1}{16}(a+1)^2(3a^8+12a^7+12a^6-28a^5-70a^4-60a^3-36a^2-52a-5).$$
We consider the polynomial $$g_2(a):=3a^8+12a^7+12a^6-28a^5-70a^4-60a^3-36a^2-52a-5.$$ By the Descartes Rule, $g_2(a)$ has at most one positive solution. It is easy to check that $g_2(1)=-224<0$ and $g_2(2)=323>0$. There exist $a_4^c\in(1,2)$ ($a_4^c\approx1,929256$) such that $g_2(a_4^c)=0$. It is easy to check that if $a\in (0, a_4^c)$ then the sign of $P_2(+\infty)$ is positive and if $a\in (a_4^c, +\infty)$ then the sign of $P_2(+\infty)$ is negative.

The rest of the proof runs as before.

Due to schema and after some algebras, we conclude that the equation \eqref{aa31} does not have any positive solution if $a\in (0, a_7^c]$ (we denote $A_1^c:=a_7^c$), the equation \eqref{aa31} has two positive solutions if $a\in (a_7^c, a_{12}^c]$ (we denote $A_2^c:=a_{12}^c$), four positive solutions if $a\in (a_{12}^c,+\infty)$.
\end{proof}

\begin{figure}[h!]
    \centering
    \caption*{In the following figures illustrate graphics of $P(x)$ for $a=1.5$, $a=3$ and $a=5$ and clarify that the statements of Lemma \ref{numsol} are true.}

    \begin{minipage}[b]{0.38\textwidth}
        \centering
        \includegraphics[width=\textwidth]{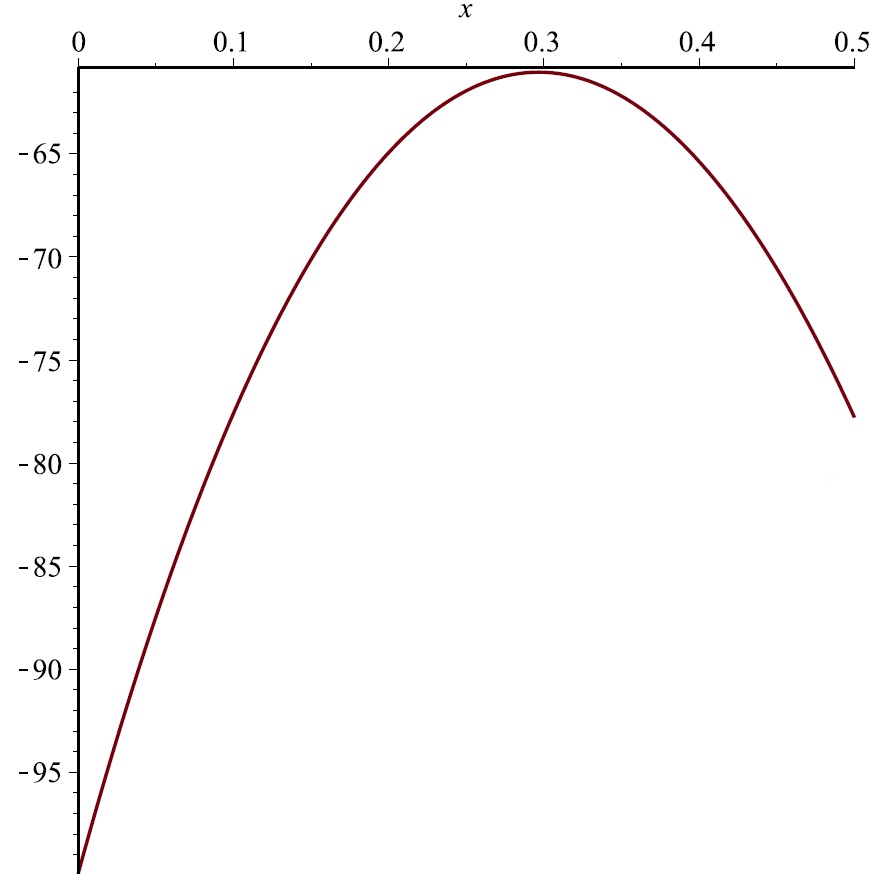}
        \subcaption{ $a=1.5$, $x\in(0, 0.5)$}
        \label{fig:grafik1}
    \end{minipage}
    \hfill 
    \begin{minipage}[b]{0.38\textwidth}
        \centering
        \includegraphics[width=\textwidth]{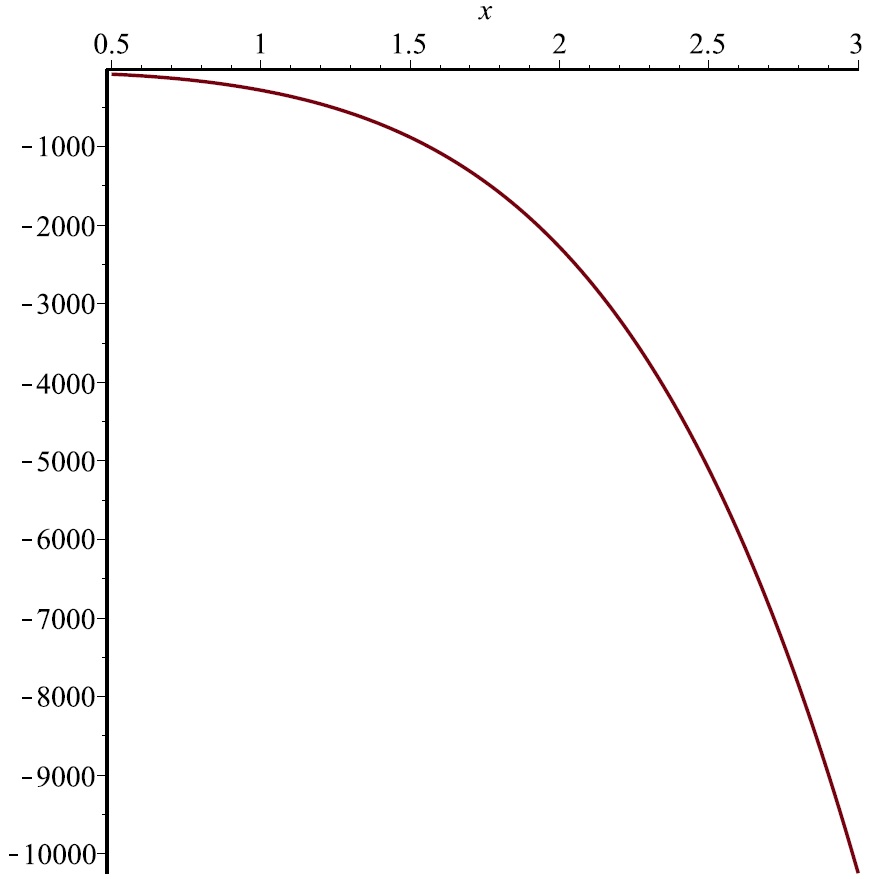}
        \subcaption{$a=1.5$, $x\in(0.5, 3)$}
        \label{fig:grafik2}
    \end{minipage}
    \caption*{Figure 1. This figures demonstrate graphics of $P(x)$ for $a=1.5$ in intervals $x\in(0, 0.5)$ and $x\in(0.5,3)$, respectively.}
    \label{fig:ikkita-grafik}
\end{figure}

\begin{figure}
    \centering
    \caption*{Computer programm shows that $P_{max}(x)\approx-61$ for $a=1.5$ in $(0,+\infty)$, i.e. in this case, the equation \eqref{aa31} does not have any positive solution.}

    \begin{minipage}[b]{0.49\textwidth}
        \centering
        \includegraphics[width=\textwidth]{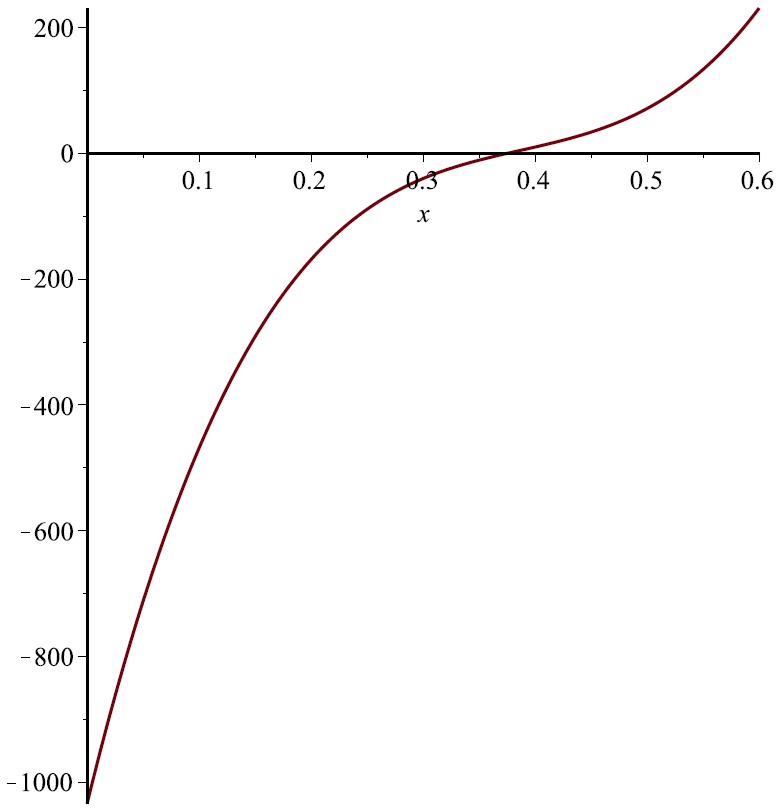}
        \subcaption{ $a=3$, $x\in(0, 0.5)$}
        \label{fig:grafik1}
    \end{minipage}
    \hfill 
    \begin{minipage}[b]{0.49\textwidth}
        \centering
        \includegraphics[width=\textwidth]{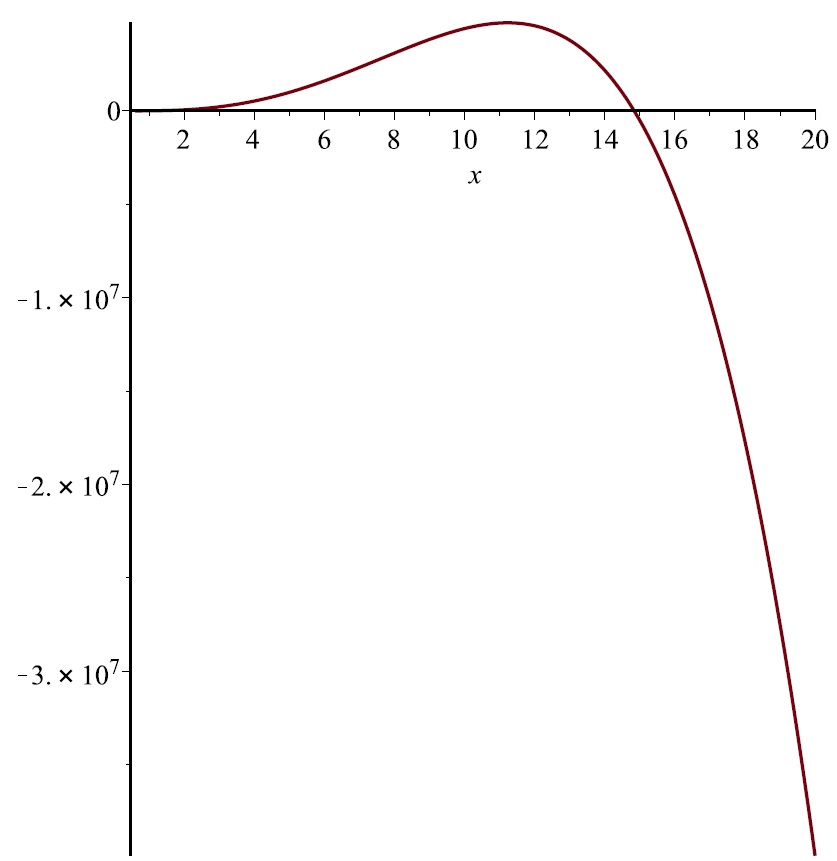}
        \subcaption{$a=3$, $x\in(0.5, 3)$}
        \label{fig:grafik2}
    \end{minipage}
    \caption*{Figure 2. This figures demonstrate graphics of $P(x)$ for $a=3$ in intervals $x\in(0, 0.6)$ and $x\in(0.6,20)$, respectively.}
    \label{fig:ikkita-grafik}
\end{figure}

\begin{figure}
    \centering
    \caption*{In this case, the equation \eqref{aa31} has two positive solutions.
}
    \begin{minipage}[b]{0.49\textwidth}
        \centering
        \includegraphics[width=\textwidth]{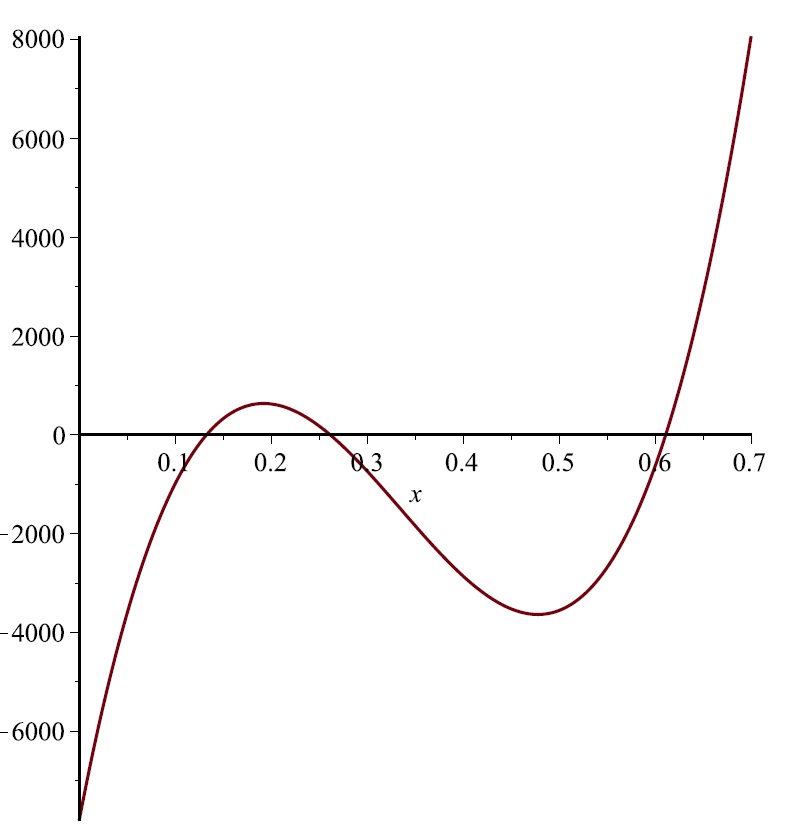}
        \subcaption{ $a=5$, $x\in(0, 0.7)$}
        \label{fig:grafik1}
    \end{minipage}
    \hfill 
    \begin{minipage}[b]{0.49\textwidth}
        \centering
        \includegraphics[width=\textwidth]{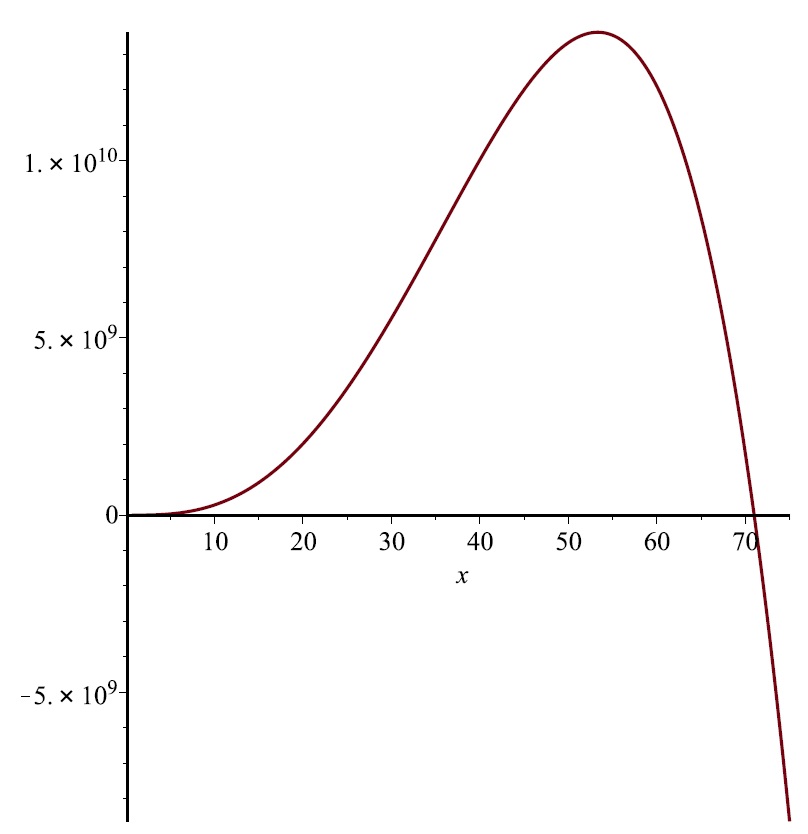}
        \subcaption{$a=5$, $x\in(0.7, 75)$}
        \label{fig:grafik2}
    \end{minipage}
    \caption*{Figure 3. This figures demonstrate graphics of $P(x)$ for $a=5$ in intervals $x\in(0, 0.7)$ and $x\in(0.7,75)$, respectively.}
In this case, the equation \eqref{aa31} has four positive solutions.
\end{figure}

\begin{theorem}\label{k2ti3}
Let $k=2$, $J_P=\frac{\alpha}{1-\alpha}J_I$ and $a=\exp\{J\beta\alpha\}$,  $a\neq1$, $\alpha\in(0,1)$. For the translation- invariant Gibbs measures of the $(2,3)$-Ising-Potts model, corresponding to the solutions of \eqref{funceqti} on the invariant set $I_2$, the following assertions hold
\begin{itemize}
  \item If $a\in (0,1)\cup (1, B^c_1)$ then there are 2 TIGMs;
  \item If $a\in [B^c_1,B^c_2)$ then there are 4 TIGMs;
  \item If $a=B^c_2$ then there are 3 TIGMs;
  \item If $a\in (B^c_2,B^c_3]$ then there are 4 TIGMs;
  \item If $a\in (B^c_3,B^c_4)$ then there are 6 TIGMs;
  \item If $a=B^c_4$ then there are 5 TIGMs;
  \item If $a\in (B^c_4,B^c_5)$ then there are 6 TIGMs;
  \item If $a\in [B^c_5,+\infty)$ then there are 8 TIGMs.
\end{itemize}
where $B^c_1\approx2,010\, \ B^c_2\approx2,179\, \ B^c_3\approx3,828\, \ B^c_4\approx4,071\, \ B^c_5\approx4,921 $.
\end{theorem}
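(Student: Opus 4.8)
The plan is to reduce the enumeration of translation-invariant Gibbs measures on $I_2$ to an enumeration of admissible positive roots of the single equation \eqref{sys i23}, and then read off the count from Lemma \ref{numsol} together with a bookkeeping of the remaining factors and of their coincidences.

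First I would fix the dictionary. A TIGM on $I_2$ is a positive solution $(z_1,z_2)$ of \eqref{sys i21}; positivity being automatic by Remark \ref{positive}, after $u_i=\sqrt{z_i}$ I am counting positive solutions $(u_1,u_2)$ of \eqref{sys i22}. Solving its first equation for $u_2^2$ gives \eqref{u2}, and substituting into the second equation gives \eqref{sys i23}. Thus every solution projects to a positive root $u_1$ of \eqref{sys i23}, and a positive root $u_1$ yields a genuine TIGM exactly when it is not the pole $u_1=u_3^{(1)}=\frac{a+1}{2a^2}$ of \eqref{u2}, the resulting $u_2^2$ is positive, and the forced $u_2=+\sqrt{u_2^2}$ satisfies the second equation of \eqref{sys i22}; the positivity of $u_2^2$ is controlled by which side of the pole $u_3^{(1)}$ the root $u_1$ lies on, while $u_1=u_3^{(1)}$ itself must be checked by hand. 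Distinct admissible pairs give distinct measures, and since $(z_1,z_2)\mapsto(z_2,z_1)$ preserves the solution set of \eqref{sys i21}, off-diagonal solutions come in pairs.

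Next I would treat the explicit factors of \eqref{sys i23}. The factor $u_1-1$ gives $u_1=1$, paired by \eqref{u2} with $u_2=1$, that is, the trivial measure $\mathbf{z}=(1,\dots,1)$, present for all $a$. The quadratic factor $-(q-1)u_1^2+(b-1)u_1-1$ (with $q=3$, $b=a$) has roots $u_{1,2}^{(1)}=\frac{a-1\pm\sqrt{(a-1)^2-8}}{4}$; squaring these recovers exactly the non-trivial fixed points on $I_1\subset I_2$ of Theorem \ref{k2ti}, so they are real, positive and distinct precisely for $a>B_3^c:=1+2\sqrt2$, coalesce at $a=B_3^c$, and disappear below it. The squared linear factor contributes $u_3^{(1)}$, whose status as a TIGM for $a\ne1$ I would decide by direct substitution into \eqref{sys i22}. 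Finally Lemma \ref{numsol} gives that the quartic $P(x)$ has $0$ positive roots for $a<B_1^c:=A_1^c\approx2.010$, exactly $2$ for $B_1^c\le a<B_5^c:=A_2^c\approx4.921$, and $4$ for $a\ge B_5^c$; I would check that, for $a$ outside a finite exceptional set, each of these roots is off-diagonal with $u_2^2>0$ and hence contributes a new measure.

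The decisive step is to locate and handle the two intermediate critical values $B_2^c\approx2.179$ and $B_4^c\approx4.071$, at which the count momentarily falls by one. I would show these are precisely the $a$ for which a root of $P(x)$ collides with an already-listed root of \eqref{sys i23}: at $a=B_2^c$ one has $P(1)=0$, so a quartic root merges into the trivial solution; at $a=B_4^c$ a root of $P$ equals $u_2^{(1)}$, so a quartic root merges into one of the $I_1$-branches (and because the $u_2^2$ attached to a given $u_1$ is forced, the two solutions genuinely coincide there). Making this rigorous is the real work: form the resultants $\mathrm{Res}_x(P(x),x-1)$ and $\mathrm{Res}_x\!\big(P(x),2x^2-(a-1)x+1\big)$ as polynomials in $a$, and, by Descartes' rule and Sturm's theorem — exactly the toolkit of Lemma \ref{numsol} — show that their only roots in the relevant ranges are $B_2^c$ and $B_4^c$ and that each collision is transversal, so the lost solution reappears immediately on both sides; one must also rule out other coincidences on the whole range (that $u_3^{(1)}\ne1$ and $u_3^{(1)}\ne u_{1,2}^{(1)}$ for $a\ne1$, that $P$ has no double positive root inside $(B_1^c,B_5^c)$, and that no root of $P$ equals $u_1^{(1)}$ there), again by discriminant and Sturm computations. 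Assembling the counts on the eight sub-intervals of $(0,+\infty)\setminus\{1\}$ then produces the list $2,4,3,4,6,5,6,8$. The main obstacle is this last transversality-and-no-extra-coincidence analysis; given Lemma \ref{numsol} and Theorem \ref{k2ti} everything else is mechanical.
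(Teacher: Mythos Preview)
Your overall strategy --- reduce to positive roots of \eqref{sys i23}, invoke Lemma~\ref{numsol} for the quartic, read off the linear and quadratic factors, and track coincidences --- is exactly the paper's approach; its entire proof is the single sentence that for $a\in(0,1)\cup(1,B_1^c)$ only $u_0^{(1)}=1$ and $u_3^{(1)}$ survive and ``the rest runs as before.'' So methodologically you are aligned with the paper, only far more careful.

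Two concrete problems, however, keep your plan from reaching the stated counts. First, the involution $(z_1,z_2)\mapsto(z_2,z_1)$ does \emph{not} preserve \eqref{sys i21}: in the second equation the combination $\theta_I^{-\alpha}z_1+\theta_I^{\alpha}z_2$ appears in both numerator and denominator, so swapping $z_1\leftrightarrow z_2$ does not reproduce the first equation. Off-diagonal solutions need not come in pairs, and you should drop that structural remark. Second --- and this is the essential gap --- your own proposal to ``decide by direct substitution'' the status of the pole $u_1=u_3^{(1)}=\frac{a+1}{2a^2}$ would, once carried out, show it to be \emph{spurious}: at that value the coefficient of $u_2^2$ in the first equation of \eqref{sys i22} vanishes while the remaining term equals $(a^2+1)(a-1)\bigl[(a+1)^4/(4a^6)+1/(2a)\bigr]\ne 0$ for $a\ne1$, so no $u_2$ exists. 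The squared linear factor in \eqref{sys i23} is therefore an artefact of clearing the denominator of \eqref{u2}, not a genuine TIGM. Yet the paper \emph{does} count $u_3^{(1)}$ --- that is precisely how it obtains $2$ in the first region and hence the whole pattern $2,4,3,4,6,5,6,8$. Your more rigorous bookkeeping would instead yield $1$ on $(0,1)\cup(1,B_1^c)$ and shift every count down by one, so the plan as written cannot establish the theorem as stated; you would either have to locate an additional genuine solution that neither you nor the paper has exhibited, or conclude that the paper's enumeration includes a spurious root.
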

\begin{proof} Let $a\in (0,1)\cup (1, B^c_1)$ then there exist only $u_0^{(1)}=1$ and $u_0^{(3)}=1$.

The rest of the proof runs as before.
\end{proof}
\begin{remark}\label{comp}
\begin{itemize}
  \item In \cite{KRH14}, under the certain conditions existence of $2^q-1$ TIGMs for the Potts model on a Cayley tree of order $k\geq2$. Theorem \ref{k2ti3} shows that if $a\in [B^c_5,+\infty)$  then there exist at least $8$ (corresponding to the invariant $I_3$) TIGMs for the $(2,3)$-state Ising-Potts model on the Cayley tree of order two, i.e. we have established more measures than previously known.
  \item In \cite{HOR25}, under the certain conditions existence of 335 TIGMs and 12 critical temperatures for the 5-state Ising-Potts model (different from \eqref{Ham}) on a Cayley tree of order two. Theorem \ref{k2ti3} shows that for the $(2,3)$-state Ising-Potts model (defined by \eqref{Ham}) under the certain conditions, existence of at least 8 TIGMs and 5 critical temperatures  on the Cayley tree of order two.
  \end{itemize}
\end{remark}

\section{Discussion: Open problems}
We now formulate some problems whose solution turns out to be sufficiently difficult, and they require further consideration:
\begin{enumerate}
  \item How many solution of \eqref{sys i24} are there if $a\neq b$?
  \item How many solution of \eqref{funceqti} does have outside the invariant set $I_3$?
  \item Finding extremality conditions of the found Gibbs measures?
  \item Can we find periodic or weakly periodic Gibbs measures for this model?
  \item How can we conclude about the dynamics of the mapping \eqref{funceqtiop}?
\end{enumerate}

\section{Acknowledgements}
The authors would like to thank professors Nasir N. Ganikhodjaev and U.A. Rozikov for suggesting this problem.

\section{Funding}
No funds, grants, or other support was received.

\section{Data availability statement}
Not applicable

\section{Conflicts of interest}
The author declares that they have no conflict of interest.


\begin{thebibliography}{9}

\bibitem{Athreya} Athreya K. B., Lahiri S. N. Measure theory and probability theory. New York: Springer. (2006), 573 p. https://doi.org/10.1007/978-0-387-35434-7.

\bibitem{Haydarov} Haydarov F. H. Kolmogorov extension theorem for non-probability measures on Cayley trees. Reviews in Mathematical Physics. 36(6) (2024).12 p. https://doi.org/10.1142/S0129055X24500107.

\bibitem{HOR25} Haydarov F.H., Omirov B.A., Rozikov U.A., Coupled Ising-Potts model: rich sets of critical temperatures and translation-invariant Gibbs measures.
https://arxiv.org/abs/2502.12014v1.

\bibitem{KRH14} Kuelske Ñ., Rozikov U.A, Khakimov R.M. Description of all translationinvariant(splitting) Gibbs measures for the Potts model on a Cayley tree. 119
Journal of Statistical Physics, Vol.156, No.1, pp. 189-200  (2014).

\bibitem{Mc89} McMahon  P. D., Ising-Potts models and solid-liquid phase equilibria in binary mixtures. J. Phys. A: Math.Gen. 22, 1989, pp. 4453-4461.

\bibitem{Prasolov} Prasolov V. V. Polynomials. Algorithms and Computation in Mathematics. Springer. (2004), 301 p. https://doi.org/10.1007/978-3-64203980-5.

\bibitem{Preston}  Preston C. Gibbs states on countable sets. Cambridge University Press, London. (1974), 128 p. https://doi.org/10.1017/CBO9780511897122.

\bibitem{RGM} Rozikov U.A. Gibbs Measures on Cayley Trees. Singapore: World Scientific. (2013), 404 p. https://doi.org/10.1142/8841.

\bibitem{RI24} Rahmatullaev M.M., Isakov  B. M., Translation-invariant Gibbs measures for the Ising-Potts model on a second-order Cayley tree. TMF, 2024, Vol.219, No.3, pp. 597-609.

\bibitem{RR21} Rahmatullaev M.M., Rasulova M.A., Extremality of translation-invariant Gibbs measures for the Potts-SOS model on the Cayley tree. J. Stat. Mech. Theory Exp., 2021:7 (2021), 073201, 18 pp.

\bibitem{S16} Saygili H., Gibbs measures for the Potts-SOS model with three states of spin values. Asian J. Current Research, 1:3 (2016), 114-121.

\end{thebibliography}
\end{document}